\newcommand{\bbV}{{\bf V}}%粗体
\newcommand{\bbU}{{\bf U}}
\newcommand{\bbD}{{\bf D}}
\newcommand{\bbA}{{\bf A}}
\newcommand{\bbTp}{{\bf T}_p}
\newcommand{\bbX}{{\bf X}}
\newcommand{\bbZ}{{\bf Z}}
\newcommand{\bbx}{{\bf x}}
\newcommand{\bbS}{{\bf S}}
\newcommand{\bbB}{{\bf B}}
\newcommand{\bSi}{\pmb \Sigma}%\pmb 加粗
\newcommand{\bbalp}{{\bf \alpha}}
\newcommand{\bgl}{{\bf \lambda}}
\newcommand{\bGma}{{\bf \Gamma}}
\newcommand{\bgO}{{\bf \Omega}}
\newcommand{\bbI}{{\bf I}}
\newcommand{\bqn}{\begin{eqnarray*}}%多行公式
	\newcommand{\eqn}{\end{eqnarray*}}
\newcommand{\bqa}{\begin{eqnarray}}
	\newcommand{\eqa}{\end{eqnarray}}
\newcommand{\al}{\alpha}
\newcommand{\CYRS}{\CYRS}
\newcommand{\E}{{\mathbb E}}%黑板粗体（空心）
\newtheorem{thm}{Theorem}[section]
\newtheorem{lemma}[thm]{Lemma}
\def\E{\mbox{E}}
\numberwithin{equation}{section}
\numberwithin{equation}{section}
\begin{document}	
\title{ CLT for LSS of sample covariance matrices with unbounded dispersions}

\author[]{Zhijun Liu}
\author[]{Zhidong Bai}
\author[]{Jiang Hu}
\author[]{Haiyan Song}
\affil[]{
\textit{{\normalsize School of Mathematics and Statistics, Northeast Normal University}}

\textit{{\small  \href{mailto:liuzj037@nenu.edu.cn}{liuzj037@nenu.edu.cn};
\href{mailto:baizd@nenu.edu.cn}{baizd@nenu.edu.cn};
\href{mailto:huj156@nenu.edu.cn}{huj156@nenu.edu.cn};
\href{mailto:songhy716@nenu.edu.cn}{songhy716@nenu.edu.cn}}}
}

\date{}
\maketitle
\begin{abstract}
Under the high-dimensional setting that data dimension and sample size tend to infinity proportionally, we derive the central limit theorem (CLT) for linear spectral statistics (LSS) of large-dimensional sample covariance matrix. Different from existing literature, our results do not require the assumption that the population covariance matrices are bounded. Moreover, many common kernel functions in the real data such as logarithmic functions and polynomial functions are allowed in this paper. In our model, the number of spiked eigenvalues can be fixed or tend to infinity. One salient feature of the asymptotic mean and covariance in our proposed central limit theorem is that it is related to the divergence order of the population spectral norm. %After simulations we found, spiked eigenvalues will affect the asymptotic distribution of LSS even if the number of spike is small.
\end{abstract}	
\section{Introduction}

%%%%%%%%%%% new introduction %%%%%%%%%%%%%
With the rapid development of modern technology, statisticians are confronted with the task of analyzing data with ever increasing dimension, such as all human DNA base pairs can reach 3.2 billion, therefore the data matrix they form is high-dimensional. In high-dimensional setting, many frequently used statistics such as linear spectral statistics in multivariate analysis perform in a completely different manner than they do on data of low dimension, in the introduction of \cite{10.1214/aop/1078415845}, they gave an example to illustrate the different behaviors of linear spectral statistics when the data is low-dimensional and high-dimensional. Consider sample covariance matrix $ \bbB=\dfrac{1}{n}\bbTp\bbX\bbX^{\ast}\bbTp^{\ast} $, where $ \bbX $ is a $ p\times n $ matrix with independent indentically distributed (i.i.d.) entries, $ \bbTp $ is a $p \times p$ deterministic matrix, $\bbTp\bbX$ can be seen as a random sample from the population with the population covariance matrix $\bbTp\bbTp^{\ast}=\bSi$.  It was showed in \cite{10.1214/aop/1078415845} that the linear spectral statistics of $ \bbB $ converged almost surely (a.s.) to a nonrandom quantity, and the rate of convergence is $ \dfrac{1}{n} $, and linear spectral statistics sequence form a tight sequence. Moreover, if $ Ex_{ij}^{2}=0 $, and $ E\left| x_{ij}\right| ^{4}=2 $, or if $ x_{ij} $ and $ \bSi $ are real and $ E\left| x_{ij}\right| ^{4}=3 $, they proved linear spectral statistics had Gaussian limits. They firstly introduced random matrix theory into mathematical statistics, therefore some  remarkable tools can be borrowed from random matrix theory.

Linear spectral statistics have a wide application in multivariate analysis, as pointed out in some literature, they can be a remarkable tool in many fields such as variable selection, community detection, and so on. Their asymptotic properties have been used to make statistical inference, such as hypothesis testing or parameter estimation. For example, in \cite{10.1214/09-AOS694}, they used asymptotic distribution of LSS of sample covariance matrix to modify LRT on large-dimensional covariance matrix.	Here we give some another applications of linear spectral statistics (LSS) in hypothesis testing.
For example:
\begin{itemize}
\item \textit{Wilks, likelihood ratio test}: $T_{W}=\sum_{i=1}^{p} \log \left(1+I_{i}\right)$. 
\item \textit{Lawly-Hotelling trace test}: $T_{L H}=\sum_{i=1}^{p} I_{i}$.
\item \textit{Bartlett-Nanda-Pillai trace test}: $T_{B N P}=\sum_{i=1}^{p} \frac{I_{i}}{1+I_{i}}$.
\item \textit{Roy largest root test}: $T_{R}=I_{1}$.
\end{itemize}
\textit{where $ I_{i} $ is the eigenvalue of $ \textbf{F}=\bbB_{1}\bbB_{2}^{-1} $ matrix.}

Following \cite{10.1214/aop/1078415845}'s development, there are many extensions under more relaxed settings.   The result in \cite{10.1214/aop/1078415845} was generalized by \cite{pan2008central} by removing the constraint on the fourth moment of the underlying random variables.  After that \cite{pan2014comparison} showed that the CLT of LSS for non-centered sample covariance matrices and \cite{ 10.1214/14-AOS1292} studied the unbiased sample covariance matrix when the population mean is unknown with  general moment conditions. \cite{chen2015clt} focused on the ultra-high dimensional case when the dimension $p$ is much larger than the sample size $n$. Other extensions on this topic can be found in \cite{najim2016gaussian, baik2018ferromagnetic, hu2019high, 10.3150/20-BEJ1237}, and so on.

Nowadays, it becomes more and more convenient for statisticans to collect data, and the data matrix they acquired become diversified. The main inpetus for this work comes from a common situation that the leading eigenvalues of sample covariance matrix generated by data matrix may be greater than the threshold. Almost all the references above have traditionally assumed that the spectral norms of $ \bSi $ are bounded in $ n $, and they do not take into account the case that the leading eigenvalues of  $ \bSi $ tend to infinity. It is a restriction in application in high-dimensional data because in many fields such as economics and wireless communication networks, the leading eigenvalues may tend to infinity. For example:
\begin{itemize}
\item \textit{\textbf{Panel data model}}(\cite{doi:10.1080/07474938.2017.1307580}): Many efforts have been put into analyzing the asymptotic power of sphericity test in high-dimensional setup, where the number of cross-sectional units $ n $ in a panel is large, whereas the number of time series observations $ T $ could also be large. When $ n $ tends to infinity jointly with $ T $, it will happen that the norm of perturbation term in the alternative hypothesis is greater than the threshold or even it goes to infinity. In this case, the existing methods which assume $ \bSi $ are bounded are not applicable. 
\item \textit{\textbf{Signal detection}}(\cite{10.1093/biomet/asw060}): We consider a measurement system consisting of $ m $ senmors, a standard model for the observed samples to detect a single signal is 
\begin{equation*} 
	x=\rho_{s}^{1/2}uh +\sigma\xi,   
\end{equation*} 
where $ h $ is an unknown $ m $-dimensional vector, $ u $ is a random variable distributed as $ N(0,1) $, $  \rho_{s}$ is the signal strength, $\sigma  $ is the noise level, and $ \xi $ is a random noise vector, independent of $ u $. Generally speaking, the noise level is lower, while the signal strength is larger, sometimes tend to infinity. Traditional methods which assumed bounded spectrum is excluded when statisticians confont with the task that the signal strength tend to infinity. 
\item \textit{\textbf{$ m $-factor structure}}: We focus on the properties of factors, which is the spiked eigenvalues of the population covariance matrix. Because of the complexity of the real data, it may happen that the spiked eigenvalues is very large or even goes to infinity.
\end{itemize}
For these reasons, it is essential and urgent to obtain the asymptotic properties of LSS when $ \bSi $ are unbounded.
Recently, in the work \cite{yin2021spectral},  they obtained the asymptotic distribution of the spectral statistics under some moment conditions with the assumption that the spectral norm of $ \bSi $ are unbounded, and their kernel function are polynomial functions.

This paper focuses on more general spiked covariance matrices instead of block diagonal structure, covariance matrix
$$
\bSi=\mathbf{V}\left(\begin{array}{cc}
	\bbD_{1} & 0 \\
	0 & \bbD_{2}
\end{array}\right) \mathbf{V}^{\ast}
$$
where $\mathbf{V}$ is an unitary matrix, $\bbD_{1}$ is a diagonal matrix consisting of the descending unbounded spiked eigenvalues, and $\bbD_{2}$ is the diagonal matrix of non-spiked eigenvalues. For the first time we establish CLT for LSS of sample covariance matrix with a more general kernel function $ f $  without the assumption that the spectrums of population covariance matrix are bounded. The result in this work clearly overcomes the drawback of spectrum bounded assumption in existing literatrue. Another important improvement of our work over majority of known results in the literature is that our results hold when kernel function $ f $  are not just polynomial functions. The most commonly used logarithmic function is allowed in this work. Furthermore, the number of divergent eigenvalues in our work is allowed to increase to infinity. Our results offer theoretical guarantee for obtaining the asymptotic distribution for LSS of sample covariance matrix, which can make the application of LSS more extensive instead of restricting with spectrum bounded condition.
One salient feature of our result is that the asymptotic distribution of LSS is varied depending on the divergence order of $ \phi_{n}\left(\al_{j} \right)f'\left( \phi_{n}\left(\al_{j} \right) \right)  $, which is consistent with \cite{yin2021spectral}. 

Based on previous knowledge, it seems that when the number of spiked eigenvalues is small, the influence caused by spiked part is little and the distribution of LSS is mainly decided by the bulk part, however, that is not the case. After sumlations, a surprise is that when the spiked eigenvalues is very large, LSS will also affected by the spiked part even though the number of spike is small. Some useful results we refer to \cite{bai2010spectral}, which is comprehensive and indepth.

The remaining sections are organized as follows. Section 2 gives a detailed description of notations and assumptions. Main result on the CLT for LSS of sample covariance matrix when the spectrum is unbounded is stated in Section 3. In Section 4, we present the results of our sumlation studies.  Lemmas and technical proofs of the theorem  are relegated to Section 5.

\section{Notations and assumptions}

Throughout the paper, we use bold Roman capital letters to represent matrices,e.g., $ \bSi $. Scalars are often in lowercase letters. Vectors follow bold italic style like $ \boldsymbol x_{i} $. We use tr$ (\bbA) $  to denote the trace of matrix $ \bbA $, $ \bbA' $ and $ \bbA^{\ast} $ denote the transpose and conjugate transpose of matrix $ \bbA $ respectively. Let $ \left[\bbA \right]_{ij}  $ denote the $ (i,j) $-th entry of the matrix $ \bbA $ and $ \oint_{\mathcal{C}}f(z)dz $ denote the contour integral of $ f(z) $ on the contour $ \mathcal{C} $.  A sequence of random vector $ \boldsymbol x_{n} $ weak converging to a random vector $ \boldsymbol x $ is denoted by $  \boldsymbol x_{n}\stackrel{d}{\rightarrow} \boldsymbol x$.

Define $ \bbX=(\boldsymbol x_{1},\ldots,\boldsymbol x_{n})=(x_{ij}) $,$ 1\leq i\leq p $,$ 1\leq j\leq n $, and it consists of independent random variables with mean 0 and variance 1. Furthermore, $E{x_{ij}^2}=0 $ for the complex case. Let $ \bbTp $ is a $p \times p$ deterministic matrix, then $\bbTp\bbX$ can be seen as a random sample from the population with the population covariance matrix $\bbTp\bbTp^{\ast}=\bSi$. Denote $\al_{1},\ldots,\al_{K}$ with the multiplicity $ m_{k}, k=1,\ldots,K $ as the spiked eigenvalues of  $ \bSi $, where $ m_{1}+\cdots+m_{K}=M$, and $ \dfrac{M}{n}=o\left( 1 \right) $. The corresponding sample covariance matrix of the observations $\bbTp\bbX$ is defined as $$\bbB=\dfrac{1}{n}\bbTp\bbX\bbX^{\ast}\bbTp^{\ast}$$
then $\bbB$ is the so-called generalized spiked sample covariance matrix. Denote $\lambda_{1},\lambda_{2},\ldots,\lambda_{M},\lambda_{M+1},\ldots,\lambda_{p}$ as the eigenvalues of the  $\bbB$. The above model includes the spiked population model as a special case, and the spiked population model was orginally proposed by \cite{10.1214/aos/1009210544}. Other extensions on this topic can be found in \cite{BAIK20061382, 10.2307/24307692, 10.1214/07-AIHP118, BAI2012167, 10.3150/20-BEJ1237, pg4mt,cai2020limiting, li2020asymptotic, zhang2020asymptotic}, and so on.
Define the singular value decomposition of $\bbTp$ as 
\begin{equation*}%打行间公式的
	\bbTp=
	\bbV
	\left( 
	\begin{array}{cc}
		\bbD_{1}^\frac{1}{2} & 0\\
		0 & \bbD_{2}^\frac{1}{2}
	\end{array}
	\right)
	\bbU^{\ast}
\end{equation*}
where $\bbU$ and $\bbV$ are unitary matrices. $\bbD_{1}$ is a diagonal matrix of the $ M $ spiked eigenvalues with the components tend to infinity and  $\bbD_{2}$ is the diagional matrix of the non-spiked eigenvalues with the bounded components. 

%定义线性谱统计量和中心项
The basic limit theorem on the eigenvalues of $ \bbB $ concerns its empirical spectral distribution $ F^{\bbB} $, where for any matrix $\bbA $, $F^{\bbA}$ denotes the empirical spectral distribution of $\bbA $, that is, if $\bbA $ is  $ p\times p $ then
\begin{equation*}
	F^{\bbA}\left(x \right)=\frac{1}{p}\left(\text{number of eigenvalues of }  \bbA \leq x \right)  	
\end{equation*}	
If for all $ i,j $, $ x_{ij} $ are independent elements, and with probablity 1, $F^{\bSi} $ converges weakly to a proper cumulative distribution function (c.d.f.) $ H $ under the condition that  $ \dfrac{p}{n}\rightarrow c>0 $ as $ p,n\rightarrow \infty $, then with probablity 1 $ F^{\bbB} $ converges in distribution to $ F^{c,H} $, a nonrandom proper c.d.f.
From definition we know $ F^{\bbB}=\frac{1}{p}\sum_{i=1}^{p}\mathbbm{1}_{\left\lbrace \lambda_{i}\leq x\right\rbrace } $, where $ \mathbbm{1}_{\left\lbrace \cdot\right\rbrace}  $ denotes the indicator function. We call $\sum_{j=1}^{p}f\left( \bgl_{j}\right) $ linear spectral statistics of the sample covariance matrix, where kernel function $ f $ is analytic. Note that,  theoretically, the condition that `f is analytic' can be relaxed to `f has fourth-order or fifth-order continous derivative' (see \cite{bai2010functional}). Practically, most of functions we use are analytic functions, such as logarithmic function, so it is acceptable to assume `f is analytic'. 

To derive the CLT for LSS of generalized sample covariance matrix, the following assumptions are made:
%\newtheoremseparator{:}
\newtheorem{assumption}{Assumption}[]
\begin{assumption} 
$ x_{ij} $ \textit{are independent random variables with common moments} $$ Ex_{ij}=0,\quad  E\left| x_{ij}\right| ^{2}=1, \quad  \beta_{x}= E\left|x_{ij}\right| ^{4}- \left|Ex_{ij}^{2}\right|^{2}-2,  \quad \alpha_{x}=\left|Ex_{ij}^{2}\right|^{2}  $$ 
\textit { and satisfy the following Lindeberg-type condition: }
\begin{align*}
	&\frac{1}{n p} \sum_{i=1}^{p} \sum_{j=1}^{n} E\left\{\left|x_{ij}\right|^{4} \mathbbm{1}_{\left\{\left|x_{ij}\right| \geq \eta \sqrt{n}\right\}}\right\} \rightarrow 0 \quad \text { for any fixed } \eta>0
\end{align*}	
\end{assumption} 
\begin{assumption} %\label{assumption}
\textit{The ratio of dimension and sample size (RDS)} $ c_{n}=\dfrac{p}{n}\rightarrow c>0, $ \textit{as both} $ n $ \textit{and} $ p $ \textit{go to infinity simultaneously}.
\end{assumption} 
\begin{assumption}
$ \bbTp $ \textit{is a} $p \times p$ \textit{deterministic matrix with unbounded spectral norm, and it admits the singular decomposition}
\begin{equation*}
	\bbTp=
	\bbV
	\left( 
	\begin{array}{cc}
		\bbD_{1}^\frac{1}{2} & 0\\
		0 & \bbD_{2}^\frac{1}{2}
	\end{array}
	\right)
	\bbU^{\ast}.
\end{equation*}
\textit{The population covariance matrix is} $ \bSi=\bbTp\bbTp^{\ast} $, \textit{and the spiked eigenvalues of matrix} $ \bSi $, $ \al_{1},\al_{2},...\al_{K} $, \textit{with multiplicities} $ m_{1},...,m_{K}, $ \textit{satisfy} $ \phi'\left(\al_{k} \right)>0  $, \textit{for} $ 1\leq k\leq K $,  \textit{where} $ \phi\left(x \right)=x\left(1+c\int\dfrac{t}{x-t}dH\left( t\right)  \right)   $. \textit{Moreover},  $ m_{1}+\cdots+m_{K}=M,  \dfrac{M}{n}=o\left( 1 \right)  $. 	
\end{assumption} 
\begin{assumption}
$ \bbTp $ \textit{is real or the variables} $x_{ij}$ \textit{are complex satisfying} $\alpha_{x}=0 .$ 
\end{assumption}
\begin{assumption}
$\bbTp^{\ast}\bbTp$ \textit{is diagonal or} $\beta_{x}=0.$	 
\end{assumption}
\newtheorem{remark}[thm]{Remark}
\begin{remark}
\textit{Assumption 4 is for the second-order moment condition of $ x_{ij} $ and Assumption 5 is for the fourth-order moment condition of $ x_{ij} $, they were firstly proposed in \cite{10.1214/14-AOS1292}}, \textit{which mean the Gaussian-like moment conditions can be alternative.}
\end{remark}
\begin{assumption}
\textit{Kernel function $ f_{1},\cdots, f_{h} $ are anlaytic on an open domain of the complex plan containing the support of the limiting spectral distribution (LSD)} $ F^{c,H}, $ \textit{ and satisfy:}  $ x_{n},y_{n} \rightarrow \infty, then \dfrac{f_{l}'\left(x_{n} \right) }{f_{l}'\left(y_{n} \right)}\rightarrow 1 $ \textit{provided} $ \dfrac{x_{n}}{y_{n}}\rightarrow 1, l=1,2,\cdots,h.  $  
\end{assumption}    
\begin{remark}
	\textit{In fact, Assumption 6 is not too restrictive to use, in the real data,  many common functions such as logarithmic function can satisify. Further, the condition in Assumption 6 can be relaxed to $ \dfrac{x_{n}}{y_{n}}-1= O\left(\dfrac{1}{\sqrt{n}} \right). $ }
\end{remark}

\section{Main Results}  
Define $$ G_{n}\left( x\right)=p\left[F^{\bbB}\left(x \right)-F^{c_{n},H_{n}}\left(x \right)   \right],  $$
$$\rho_{l}=\dfrac{1}{\sqrt{\sum_{k=1}^{K}\left(\dfrac{\phi_{n}\left(\al_{k} \right) }{\sqrt{n}}f'_{l}\left(\phi_{n}\left(\al_{k} \right) \right)  \right)^{2}+1 }} \quad l=1,2,\cdots,h, $$ 
$$  \boldsymbol Y_{n}\left(f_{l} \right)=\rho_{l} \left[ \int f_{l}\left(x \right)dG_{n}\left( x\right)-\sum_{k=1}^{K}\sum_{j\in J_{k}}f_{l}\left(\phi_{n}\left(\al_{j} \right)  \right)-\dfrac{M}{2\pi i}\oint_{\mathcal C}f_{l}\left(z \right)\dfrac{\underline{m}'(z)}{\underline{m}(z)}dz\right].  $$
%$$\boldsymbol Y_{n}\left(f_{l} \right)=\boldsymbol X_{n}\left(f_{l} \right)$$
%$$\kappa=\dfrac{1}{n}\rho_{s}\rho_{t}\sum_{k=1}^{K}\phi_{n}^{2}\left(\al_{k} \right)f'_{s}\left(\phi_{n}\left(\al_{k} \right)  \right)f'_{t}\left(\phi_{n}\left(\al_{k} \right)  \right)$$
The main result is stated in the following theorem.
\begin{thm}\label{thm1}
\textit{Under Assumption} $ 1 $ \textit{to} $6 $, \textit{the random vector } 
$ \left( \boldsymbol Y_{n}\left(f_{1} \right),...,\boldsymbol Y_{n}\left(f_{h} \right)\right)  $ \textit{ converges weakly to a h-dimensional Gaussian vector} $\left(  \boldsymbol Y_{f_{1}},...,\boldsymbol Y_{f_{h}}\right) $, \textit{with mean function}
\begin{align*}
	E\boldsymbol Y_{f_{l}}&=\rho_{l}\mu_{l},
\end{align*}
\textit{and covariance function}
\begin{align*}
	Cov\left(\boldsymbol Y_{f_{s}}, \boldsymbol Y_{f_{t}}\right)
	&=\dfrac{1}{n}\rho_{s}\rho_{t}\sum_{k=1}^{K}\phi_{n}^{2}\left(\al_{k} \right)f'_{s}\left(\phi_{n}\left(\al_{k} \right)  \right)f'_{t}\left(\phi_{n}\left(\al_{k} \right)  \right)\sigma_{k}^{2}+\rho_{s}\rho_{t}\sigma^{2}_{s,t}
\end{align*} 
\textit{where}
\begin{align*}
\mu_{l}=&-\frac{\alpha_{x}}{2 \pi i}\cdot\oint_{\mathcal{C}}f_{l}(z)\dfrac{  c \int \underline{m}^{3}(z)t^{2}\left(1+t \underline{m}(z)\right)^{-3} d H(t)}{\left(1-c \int \frac{\underline{m}^{2}(z) t^{2}}{\left(1+t \underline{m}(z)\right)^{2}} d H(t)\right)\left(1-\alpha_{x} c \int \frac{\underline{m}^{2}(z) t^{2}}{\left(1+t \underline{m}(z)\right)^{2}} d H(t)\right) }dz \\
&-\frac{\beta_{x}}{2 \pi i} \cdot \oint_{\mathcal{C}} f_{l}(z) \frac{c \int \underline{m}^{3}(z) t^{2}\left(1+t \underline{m}(z)\right)^{-3} d H(t)}{1-c \int \underline{m}^{2}(z) t^{2}\left(1+t \underline{m}(z)\right)^{-2} d H(t)} dz,\\	
\sigma_{k}^{2}=&\dfrac{\sum_{j\in J_{k}}\left( \left(q+1 \right)\theta_{k}+\pi_{x,jjjj}\nu_{k}\right) +\sum_{j_{1}\neq j_{1}}\pi_{x,j_{1}j_{1}j_{2}j_{2}}\nu_{k} }{\theta_{k}^{2}},\\
\sigma^{2}_{s,t}=&-\frac{1}{4 \pi^{2}} \oint_{\mathcal{C}_{1}} \oint_{\mathcal{C}_{2}} \frac{f_{s} \left(z_{1}\right) f_{t}\left(z_{2}\right)}{\left(\underline{m} {\left.\left(z_{1}\right)-\underline{m}\left(z_{2}\right)\right)^{2}}\right.} d \underline{m}\left(z_{1}\right) d \underline{m}\left(z_{2}\right) 
-\frac{c \beta_{x}}{4 \pi^{2}} \oint_{\mathcal{C}_{1}} \oint_{\mathcal{C}_{2}} f_{s}\left(z_{1}\right) f_{t}\left(z_{2}\right)\\
&\left[\int \frac{t}{\left(\underline{m}\left(z_{1}\right) t+1\right)^{2}}\right.
\left.\times \frac{t}{\left(\underline{m}\left(z_{2}\right) t+1\right)^{2}} d H(t)\right] d \underline{m}\left(z_{1}\right) d \underline{m}\left(z_{2}\right)\\
&\quad-\frac{1}{4 \pi^{2}} \oint_{\mathcal{C}_{1}} \oint_{\mathcal{C}_{2}} f_{s}\left(z_{1}\right) f_{t}\left(z_{2}\right)\left[\frac{\partial^{2}}{\partial z_{1} \partial z_{2}} \log \left(1-a\left(z_{1}, z_{2}\right)\right)\right] d z_{1} d z_{2}. \end{align*}
\textit{In the equation above,}  $ \underline{m}(z) $ \textit{is the stieltjes transform of} $ \underline{F}^{c,H} $, $$ \pi_{x,i_{1}j_{1}i_{2}j_{2}}=\mathrm{lim}\sum_{t=1}^{p}\overline{u}_{ti_{1}}u_{tj_{1}}u_{ti_{2}}\overline{u}_{tj_{2}}\\E\left\lbrace\left|x_{11} \right|^{4} \mathbbm{1}\left\lbrace\left|x_{11} \right|<\sqrt{n}  \right\rbrace -2-q   \right\rbrace, 1\leq j\leq M,  $$ $$\theta_{k}=\phi_{k}^{2}\underline{m}_{2}\left( \phi_{k}\right), \nu_{k}=\phi_{k}^{2} \underline{m}^{2}\left(\phi_{k}\right)$$$$ \underline{m}_{2}\left( \lambda\right)=\int\dfrac{1}{\left( \lambda-x\right) ^{2}}d\underline{F}\left( x\right), \underline{m}\left( \lambda\right)=\int\dfrac{1}{x- \lambda }d\underline{F}\left( x\right). $$ \textit{with} $ \underline{F} $ \textit{being the LSD of matrix} $ n^{-1}\bbX^{\ast}\bGma\bbX$, 
$ \bbU_{1}=\left(u_{ij} \right)_{i=1,2,...p;j=1,2,...M}  $,  and $ q=1 $ \textit{for the real case},  $ q=0 $  \textit{for the complex case} , $ \phi_{k}=\phi\left(x \right)\mid_{x=\al_{k}}=\al_{k}\left(1+c\int\dfrac{t}{\al_{k}-t}dH\left(t \right)  \right)  $, $ \phi_{n}\left( \al_{k}\right) $ \textit{has the same form as} $ \phi_{k} $ \textit{but with} $ c_{n},H_{n} $ \textit{instead of} $ c,H $. $\mathcal{C}, \mathcal{C}_{1}$ \textit{and} $\mathcal{C}_{2}$ \textit{are closed contours in the complex plan enclosing the support of the limiting spectral distribution (LSD)} $ F^{c, H}$, \textit{and} $\mathcal{C}_{1}$ \textit{and} $\mathcal{C}_{2}$ \textit{being nonoverlapping, the function} $a\left(z_{1}, z_{2}\right)$ \textit{is}
$$
a\left(z_{1}, z_{2}\right)=\alpha_{x}\left(1+\frac{\underline{m}\left(z_{1}\right) \underline{m}\left(z_{2}\right)\left(z_{1}-z_{2}\right)}{\underline{m}\left(z_{2}\right)-\underline{m}\left(z_{1}\right)}\right).
$$

\end{thm}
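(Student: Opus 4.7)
The plan is to write the centered LSS as a contour integral of the Stieltjes transform and then decompose it cleanly into a ``bulk'' contribution and a ``spiked'' contribution whose asymptotic joint distribution can be analyzed separately and then combined. Since $f_l$ is analytic in a neighborhood of the support of $F^{c,H}$ and the spiked eigenvalues $\lambda_1,\dots,\lambda_M$ lie outside that support with high probability, I would choose two disjoint closed contours: $\mathcal C_{\rm bulk}$ enclosing only the limiting bulk support, and small circles $\mathcal C_k$ around each $\phi(\alpha_k)$ that are passed through by $\lambda_j$ for $j\in J_k$. By Cauchy's formula,
\begin{equation*}
\int f_l(x)\,dG_n(x) \;=\; -\frac{1}{2\pi i}\oint_{\mathcal C_{\rm bulk}} f_l(z)\,p\bigl[m_n^{\bbB}(z)-m_n^{c_n,H_n}(z)\bigr]dz \;+\; \sum_{k=1}^K\sum_{j\in J_k} f_l(\lambda_j) + R_n,
\end{equation*}
where the remainder $R_n$ comes from replacing $F^{c_n,H_n}$ (which does not feel the spikes) by the pure-bulk LSD; this remainder produces exactly the deterministic shift $\frac{M}{2\pi i}\oint f_l(z)\,\underline m'(z)/\underline m(z)\,dz$ after a residue calculation, since each spike removed from $F^{c_n,H_n}$ carries a mass $1/p$ whose integral against the Stieltjes transform gives that logarithmic derivative.

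For the bulk piece I would apply a truncated version of the Bai--Silverstein CLT machinery to the ``deflated'' matrix $\widetilde{\bbB}$ obtained by restricting $\bbT_p$ to the non-spiked subspace: because $\bbD_2$ has bounded spectral norm and $\bbU$ is unitary, the bulk limiting spectrum of $\widetilde{\bbB}$ matches that of $\bbB$, and the standard martingale-difference / CLT-for-Stieltjes-transform argument (e.g.\ \cite{10.1214/aop/1078415845, pan2008central}) applies under Assumptions 1--2 and 4--5. This yields jointly Gaussian fluctuations with mean $\mu_l$ and covariance $\sigma^2_{s,t}$. The delicate point is verifying that replacing $\bbB$ by $\widetilde{\bbB}$ inside the bulk contour produces only an $o(1)$ error; I would use a resolvent identity of Sherman--Morrison--Woodbury type together with the fact that the spike resolvent contributes only $O(M/n)=o(1)$ to $\oint f_l(z)(m^{\bbB}-m^{\widetilde{\bbB}})(z)\,dz$ as long as $\mathcal C_{\rm bulk}$ stays away from $\phi(\alpha_k)$.

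For the spiked piece I would Taylor-expand $f_l(\lambda_j)$ around $\phi_n(\alpha_j)$:
\begin{equation*}
\sum_{k=1}^K\sum_{j\in J_k}\bigl[f_l(\lambda_j)-f_l(\phi_n(\alpha_j))\bigr] \;=\; \sum_{k=1}^K\sum_{j\in J_k} f'_l(\phi_n(\alpha_j))\bigl(\lambda_j-\phi_n(\alpha_j)\bigr) \;+\; \text{remainder}.
\end{equation*}
The joint CLT for the vector $(\lambda_j-\phi_n(\alpha_j))_{j\in J_k,\,k\le K}$ scaled by $\sqrt n/\phi_n(\alpha_k)$ follows from the generalized-spike fluctuation result of \cite{yin2021spectral} (using Assumption~3 with $\phi'(\alpha_k)>0$), giving Gaussian limits with variances proportional to $\sigma^2_k$. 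Assumption~6, which is exactly designed for this, ensures that $f'_l(\lambda_j)$ and $f'_l(\phi_n(\alpha_j))$ are asymptotically interchangeable, so the second-order remainder in the Taylor expansion is $o_p\bigl(\phi_n(\alpha_k)f'_l(\phi_n(\alpha_k))/\sqrt n\bigr)$ and hence negligible after multiplication by $\rho_l$.

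Finally, joint convergence of the bulk and spiked parts is obtained by the Cram\'er--Wold device applied to arbitrary linear combinations: the bulk fluctuation depends on $\bbX$ through resolvent traces over the non-spiked block, while the spike fluctuation depends on the projections of the sample onto the spiked eigenvectors, and a careful computation shows that their cross-covariance vanishes in the limit (this is where the block structure of $\bSi$ and the unitary invariance encoded in $\bbU,\bbV$ matter). Normalizing by $\rho_l$ then produces the stated mean and covariance, because the spike contribution scales with $\phi_n(\alpha_k) f'_l(\phi_n(\alpha_k))/\sqrt n$ and the bulk contribution scales as $O(1)$, and $\rho_l$ is precisely the Pythagorean normalizer of the two. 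The hardest step, in my view, is controlling the cross-terms and the Taylor remainder uniformly when $M\to\infty$ together with the spikes diverging: one must simultaneously track (i) the escape of spiked sample eigenvalues from the bulk contour, (ii) the smallness of $M/n$, and (iii) the rate at which $\phi_n(\alpha_k)$ diverges, and it is only Assumption~6 together with $M/n=o(1)$ that lets these errors be absorbed after the $\rho_l$ rescaling.
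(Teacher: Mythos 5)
Your decomposition into a bulk contour plus small circles around the spikes, the first–order Taylor expansion of $f_l(\lambda_j)$ around $\phi_n(\alpha_j)$ using Assumption~6, and the Pythagorean role of $\rho_l$ all match the structure of the paper's argument. But you have misidentified where the deterministic correction $\frac{M}{2\pi i}\oint_{\mathcal C}f_l(z)\,\underline m'(z)/\underline m(z)\,dz$ comes from, and the mistake is not cosmetic.

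You attribute the correction to the remainder $R_n$ produced by replacing the centering measure $F^{c_n,H_n}$ (whose population input $H_n=F^{\bbU_2\bbD_2\bbU_2^*}$ carries $M$ point masses at $0$) by the pure-bulk LSD $F^{c_{nM},H_{2n}}$. The paper's Lemma~\ref{lemma1} shows this mismatch is \emph{exactly} zero: the two Marchenko--Pastur self-consistency equations with parameters $(c_n,H_n)$ and $(c_{nM},H_{2n})$ produce the same $\underline m_{1n0}=\underline m_{2n0}$, so $p\int f_l\,dF^{c_n,H_n}=(p-M)\int f_l\,dF^{c_{nM},H_{2n}}$ with no residue term. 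Meanwhile, you claim that replacing $m_n^{\bbB}$ by the deflated $m_n^{\widetilde{\bbB}}$ inside $\oint_{\mathcal C_{\rm bulk}} f_l(z)\,p[\,\cdot\,]\,dz$ costs only $o(1)$. This is false: $m_n^{\bbB}-m_n^{\widetilde{\bbB}}$ is indeed $O(M/p)$ pointwise, but the factor $p$ in the integrand makes the contribution $O(M)$, and the paper's Lemma~\ref{lemma2} (via the block inversion of the resolvent of $\bbS$, with $T_1,T_2$ tracked explicitly) shows this is exactly what converges to $\frac{M}{2\pi i}\oint_{\mathcal C}f_l(z)\,\underline m'(z)/\underline m(z)\,dz$. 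So the correction term you need lives precisely in the place you declared negligible, and the place you located it contributes nothing. As written, the bookkeeping would either double-count or omit an $O(M)$ deterministic shift, and the error cannot be absorbed after $\rho_l$-rescaling since it does not scale with the spike size.

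The remaining pieces of your outline --- invoking the bulk CLT of Zheng--Bai--Jack type on $\bbS_{22}$, the spike fluctuation CLT, and joint convergence with asymptotically vanishing cross-covariance --- are compatible with the paper's Lemmas~\ref{lemma3}--\ref{lemma5}, although the paper establishes the asymptotic independence by a conditioning argument (given the bulk trace $\mathrm{tr}(\phi_k\bbI-\frac1n\bbX^*\bGma\bbX)^{-1}$, the spike fluctuation depends on the bulk only through its limit) rather than by a direct Cram\'er--Wold cross-covariance computation; either route should work, but your version still needs the corrected accounting of the $M$-term above.
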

\begin{remark}
\textit{From CLT above it is not difficult to see the asymptotic distribution is varied depending on the divergence order of} $ \rho_{l} $, \textit{which is also the divergence order of} $ \dfrac{ \phi_{n}\left(\al_{k} \right)}{\sqrt{n}} f'_{l}\left(\phi_{n}\left(\al_{k} \right) \right)  $. \textit{It means the approximate divergence order of the spiked eigenvalues decides the asymptotic results}, \textit{which is consistent with} \cite{yin2021spectral}.  
\end{remark}
\begin{remark}
\textit{In the work} \cite{yin2021spectral}, \textit{they obtain the asymptotic CLT when kernel functions are polynomial functions. Comparing with it, we allow more general functions such as logarithmic functions.} 	
\end{remark}
\begin{remark}
	In particular, when $ \dfrac{ \phi_{n}\left(\al_{1} \right)}{\sqrt{n}} f'_{l}\left(\phi_{n}\left(\al_{1} \right) \right)\rightarrow 0 \left( n\rightarrow \infty \right),$ Theorem \ref{thm1} is inline with Theorem 2.1 in \cite{10.1214/14-AOS1292}, and when $ \dfrac{ \phi_{n}\left(\al_{1} \right)}{\sqrt{n}} f'_{l}\left(\phi_{n}\left(\al_{1} \right) \right)\rightarrow \infty \left( n\rightarrow \infty \right),$ Theorem \ref{thm1} is in accord with the CLT derived by \cite{10.3150/20-BEJ1237}. Moreover, $ \dfrac{\phi_{n}\left(\al_{k} \right)}{\sqrt{n}}f_{l}'\left( \phi_{n}\left(\al_{k} \right)\right) $ \textit{is obtained from the spiked part of the linear spectrum statistics}  $ \sum_{j=1}^{M}f_{l}\left(\lambda_{j} \right)-\sum_{k=1}^{K}\sum_{j\in J_{k}}f_{l}\left(\phi_{n}\left(\al_{j} \right)  \right)  $ \textit{(which is shown in the Section 5)}.
\end{remark}

\section{Simulation}
We conducted a number of simulation studies to examine the asymptotic distribution of linear spectral statistics when the spectrums of $ \bSi $ are unbounded. In our experiments, we define $ \bbZ = \bSi^{\frac{1}{2}}\bbX $, where $ \bbX=(\boldsymbol x_{1},...,\boldsymbol x_{n})  $, and $ \left\lbrace \boldsymbol x_{i}  \right\rbrace_{1\leq i\leq n}  $ are $ p $-variate random vectors, and we assume that they come from Gaussian distribution. Clearly, sample covariance matrix $ \bbB=\dfrac{1}{n}\bbZ\bbZ' $.  Moreover, we set $ p=100 $, $ n=3000 $. About simulation we refer to \cite{zheng2017clt,jonsson1982some}. For the setting of eigenvalues of  $ \bSi $, we consider the following spectrums: 
\begin{enumerate}[(1)]  
\item  $ \operatorname{Spec}\left(\bSi\right)=\{\underbrace{n^{1/3}, \cdots, n^{1/3}}_{6}, \underbrace{n^{1/3}-1, \cdots, n^{1/3}-1}_{6}, \underbrace{n^{1/4}-2, \cdots, n^{1/4}-2}_{6}, \underbrace{1, \cdots, 1}_{p-18}\} ; $ 
\item $ \operatorname{Spec}\left(\bSi\right)=\{\underbrace{n^{1/2}, \cdots, n^{1/2}}_{6}, \underbrace{n^{1/3}-1, \cdots, n^{1/3}-1}_{6}, \underbrace{n^{1/4}-2, \cdots,  n^{1/4}-2}_{6}, \underbrace{1, \cdots, 1}_{p-18}\} ; $   
\item $ \operatorname{Spec}\left(\bSi\right)=\{\underbrace{n, \cdots, n}_{6}, \underbrace{n^{1/2}, \cdots, n^{1/2}}_{6}, \underbrace{n^{1/3}, \cdots, n^{1/3}}_{6}, \underbrace{1, \cdots, 1}_{p-18}\} ;  $
\end{enumerate}

We highlight some salient features of our choices of the spiked eigenvalues of $ \bSi $: Similarly, all the spectrums above satisify that the differences between two adjacent different spiked eigenvalues are divergent after multiplying by $\sqrt{n} $, so as to ensure the limiting distribution of spiked eigenvalues nonoverlaped. Moreover, the divergence speed of the spectrum norm of (1) is smaller than $ \sqrt{n} $, while 
the divergence speed of the that of (2) and (3) is equal to $ \sqrt{n} $ and larger than $ \sqrt{n} $.

In our framework,  we study the cases when kernel function $ f(x)=x $, and we suppose $ \bbx_{ij} $ and $ \bbTp $ are real. The number of spike is: $ M=18 $.
In Figure $ 1-3 $, we compare the empirical density (the blue histogram) of  the linear spectral statistics when the spectrums of population covariance matrix are $ (1)-(3) $ with the standard
normal density curve (the red line) with 3000 repetitions.

\begin{figure}[H]
	\centering
	\begin{minipage}[t]{0.3\textwidth}
		\centering
		\includegraphics[width=4.5cm]{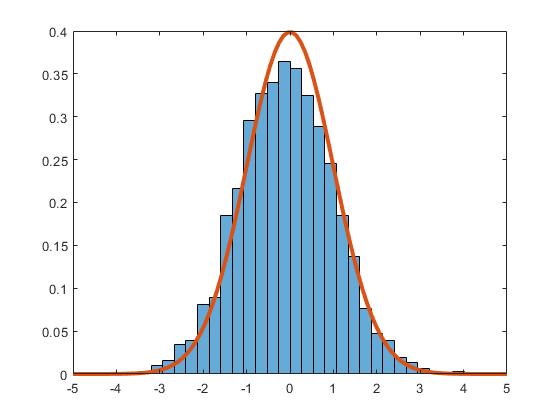}
		\captionsetup{font={scriptsize}}
		\caption{(p, n) = (100, 3000) under spectrum (1), M=18}
	\end{minipage}
	\begin{minipage}[t]{0.3\textwidth}
		\centering
		\includegraphics[width=4.5cm]{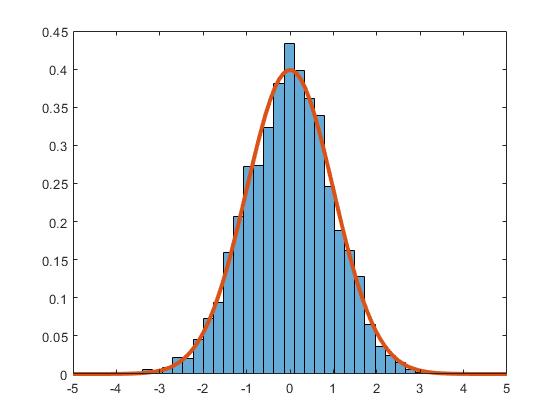}
		\captionsetup{font={scriptsize}}
		\caption{(p, n) = (100, 3000) under spectrum (2), M=18}
	\end{minipage}
  \begin{minipage}[t]{0.3\textwidth}
	    \centering
	    \includegraphics[width=4.5cm]{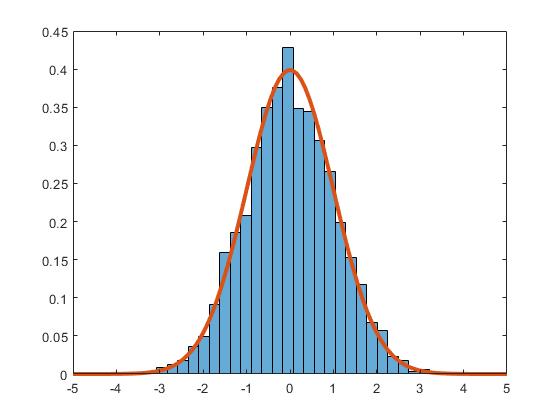}
	    \captionsetup{font={scriptsize}}
	    \caption{(p, n) = (100, 3000) under spectrum (3), M=18}
  \end{minipage}
\end{figure}

\section{Technical proofs}
\subsection{Some primary definitions and lemmas}
In this section, we will give some useful results that will be later  used in the proof of Theorem \ref{thm1}. For the population covariance matrix $\bSi=\bbTp\bbTp^{\ast}$, consider the corresponding sample covariance matrix $ \bbB=\bbTp\bbS_{x}\bbTp^{\ast} $, where $ \bbS_{x}=\frac{1}{n}\bbX\bbX^{\ast}  $
By singular value decomposition of $ \bbTp $,
\begin{equation*}
	\bbB=\bbV\left(\begin{array}{cc}
		\bbD_{1}^\frac{1}{2}\bbU_{1}^{\ast}\bbS_{x}\bbU_{1}\bbD_{1}^\frac{1}{2}  &\bbD_{1}^\frac{1}{2}\bbU_{1}^{\ast}\bbS_{x}\bbU_{2}\bbD_{2}^\frac{1}{2}\\
		\bbD_{2}^\frac{1}{2}\bbU_{2}^{\ast}\bbS_{x}\bbU_{1}\bbD_{1}^\frac{1}{2}	&\bbD_{2}^\frac{1}{2}\bbU_{2}^{\ast}\bbS_{x}\bbU_{2}\bbD_{2}^\frac{1}{2}
	\end{array} \right)\bbV^{\ast}
\end{equation*}
where $ \bbU=\left(\bbU_{1},\bbU_{2} \right)  $.
Denote 
\begin{equation*}
	\bbS=\left(\begin{array}{cc}
		\bbD_{1}^\frac{1}{2}\bbU_{1}^{\ast}\bbS_{x}\bbU_{1}\bbD_{1}^\frac{1}{2}  &\bbD_{1}^\frac{1}{2}\bbU_{1}^{\ast}\bbS_{x}\bbU_{2}\bbD_{2}^\frac{1}{2}\\
		\bbD_{2}^\frac{1}{2}\bbU_{2}^{\ast}\bbS_{x}\bbU_{1}\bbD_{1}^\frac{1}{2}	&\bbD_{2}^\frac{1}{2}\bbU_{2}^{\ast}\bbS_{x}\bbU_{2}\bbD_{2}^\frac{1}{2}
	\end{array} \right)	\triangleq\left(\begin{array}{cc}\bbS_{11}&\bbS_{12}\\\bbS_{21}&\bbS_{22}\end{array} \right) 
\end{equation*}
Note that $ \bbB $ and $ \bbS $ have the same eigenvalues. 

The analysis below is carried out by dividing the linear spectral statistics $ \sum_{j=1}^{p}f\left( \bgl_{j}\right) $ into two parts: the spiked part $ \sum_{j=1}^{M}f\left( \bgl_{j}\right) $ and the bulk part $ \sum_{j=M+1}^{p}f\left( \bgl_{j}\right) $. From \cite{10.1214/aop/1078415845} or \cite{10.1214/14-AOS1292} we know the center part of $ \sum_{j=1}^{p}f\left( \bgl_{j}\right) $ is $ p\int f\left( x\right)dF^{c_{n},H_{n}}\left( x\right) $, where $ c_{n}=\frac{p}{n} $, $ H_{n}=F^{\bGma} $, note that here $ \bGma=\bbU_{2}\bbD_{2}\bbU_{2}^{\ast} $. By Cauchy integral formula, $p\int f\left( x\right)dF^{c_{n},H_{n}}\left( x\right)=-\frac{1}{2\pi i}\oint_{C}f\left(z \right)pm_{1n0}\left(z \right)dz $, where $ m_{1n0} $ is the stietjes transform of $ F^{c_{n},H_{n}} $.  

%Note that $ C_{s}$ is made up of $ M $ sub-contours, each sub-contour enclosing a spiked eigenvalue in it, and we could link all $ M $ sub-contours together by lines. In this way we make the contour $ C_{s}$. Similarly, contour $ C $ is made in the same way: $ C_{s}$ is made up of $ M $ sub-contours, $ C_{b} $ is made up of $ \left( p-M\right)  $ sub-contours, each sub-contour encloses an eigenvalue in it, we link all $ p $ sub-contours together by lines.
Consider the matrix $ \bbS_{22} $, denote $ \widetilde{\bgl_{j}} $ the eigenvalues of $ \bbS_{22} $, so the linear spectral statistics of $ \bbS_{22} $ is $ \sum_{j=1}^{p-M}f\left( \widetilde{\bgl_{j}}\right) $. $ \left( p-M\right)\int f\left(x \right)dF^{c_{nM},H_{2n}} $ is the corresponding center part of $ \sum_{j=1}^{p-M}f\left( \widetilde{\bgl_{j}}\right) $, where $ c_{nM}=\frac{p-M}{n} $, $ H_{2n}=F^{\bbD_{2}} $. By Cauchy integral formula, $\left( p-M\right)\int f\left(x \right)dF^{c_{nM},H_{2n}}=-\frac{1}{2\pi i}\oint_{C}f\left(z \right)\left( p-M\right) m_{2n0}\left(z \right)dz.$ $ m_{2n0} $ is the stietjes transform of $ F^{c_{nM},H_{2n}} $. 

We denote  $\underline{\bbB}=\dfrac{1}{n}\bbX^{\ast}\bbTp^{\ast}\bbTp\bbX $, (the spectral of which differs from that of $ \bbB $ by $ \left|  n-p\right|  $ zeros). Its limiting spectral distribution is $ \underline{F}^{c,H} $, $\underline{F}^{c,H}\equiv\left(1-c \right)\mathbbm{1}_{\left[0,\infty \right) }+cF^{c,H}   $, and its stieltjes transform is $ \underline{m}\left(z \right)  $ .  

\begin{lemma}\label{lemma1}
\textit{The difference between two center part} $ \left( p-M\right)\int f\left(x \right)dF^{c_{nM},H_{2n}} $ \textit{and} $p\int f\left( x\right)dF^{c_{n},H_{n}}\left( x\right) $ \textit{is} $ 0 $,
\textit{where} $ f $ \textit{is an anlaytic function}. 
\end{lemma}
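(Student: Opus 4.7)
The plan is to represent each of the two quantities as a contour integral of the appropriate Stieltjes transform via the Cauchy integral formula, then invoke the Silverstein equation to identify the two companion transforms, and finally observe that the resulting difference integrates to zero. Explicitly, I would write
\[
p\int f(x)\,dF^{c_n,H_n}(x)=-\frac{1}{2\pi i}\oint_{\mathcal{C}} f(z)\,p\,m_{1n0}(z)\,dz,
\]
and the analogous representation for $(p-M)\int f\,dF^{c_{nM},H_{2n}}$, choosing the common contour $\mathcal{C}$ to enclose the support of the limiting $F^{c,H}$ but not the origin.

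The core algebraic step will be to show that the companion Stieltjes transforms $\underline{m}_{1n0}$ and $\underline{m}_{2n0}$ coincide. The key observation is that $H_n=F^{\bGma}$ with $\bGma=\bbU_2\bbD_2\bbU_2^{\ast}$ places an atom of mass $M/p$ at $0$, which contributes nothing to $\int t/(1+t\underline{m}(z))\,dH_n(t)$. Consequently,
\[
c_n\!\int\!\frac{t\,dH_n(t)}{1+t\underline{m}(z)} \;=\; \frac{p-M}{n}\!\int\!\frac{t\,dH_{2n}(t)}{1+t\underline{m}(z)} \;=\; c_{nM}\!\int\!\frac{t\,dH_{2n}(t)}{1+t\underline{m}(z)},
\]
so the Silverstein equations defining $\underline{m}_{1n0}$ and $\underline{m}_{2n0}$ become identical, and uniqueness of the solution in the upper half plane forces $\underline{m}_{1n0}\equiv\underline{m}_{2n0}$.

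I would then pass back to $m_{1n0}$ and $m_{2n0}$ by means of the identity $\underline{F}^{c,H}=(1-c)\mathbbm{1}_{[0,\infty)}+cF^{c,H}$, which on the level of Stieltjes transforms gives $p\,m_{1n0}(z)=n\,\underline{m}_{1n0}(z)+(n-p)/z$ and $(p-M)m_{2n0}(z)=n\,\underline{m}_{2n0}(z)+(n-p+M)/z$. Subtracting yields the clean identity
\[
p\,m_{1n0}(z)-(p-M)\,m_{2n0}(z)\;=\;-\frac{M}{z}.
\]
Plugging this into the contour representations reduces the difference of the two centering terms to $\frac{M}{2\pi i}\oint_{\mathcal{C}}f(z)/z\,dz$, which vanishes by Cauchy's theorem since $f(z)/z$ is analytic inside $\mathcal{C}$.

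The main subtlety I anticipate is ensuring that $\mathcal{C}$ can indeed be drawn so as to exclude the origin while still enclosing the supports of both $F^{c_n,H_n}$ (modulo the artificial atom at $0$ inherited from the $M$ zero eigenvalues of $\bGma$) and $F^{c_{nM},H_{2n}}$. This is consistent with the rest of the paper's use of a contour around the support of $F^{c,H}$, which is bounded away from $0$ because $M/p\to 0$ washes out the atom in the limit and the limiting $H$ has no mass at $0$ under Assumption 3; once this placement of $\mathcal{C}$ is justified, the remaining manipulations are purely algebraic and essentially amount to unpacking the Silverstein equation.
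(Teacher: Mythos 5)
Your proof is correct and follows the same strategy as the paper's: represent both centering terms as contour integrals via Cauchy's formula, show the companion Stieltjes transforms $\underline{m}_{1n0}$ and $\underline{m}_{2n0}$ coincide because the atom of $H_n$ at zero contributes nothing to $\int t/(1+t\underline{m})\,dH_n(t)$, and conclude. Your version is slightly cleaner in two respects worth noting: you observe that the two Silverstein equations are literally identical and invoke uniqueness directly, whereas the paper subtracts the equations and factors, silently relying on a second factor being nonzero; and by carrying the explicit residual $p\,m_{1n0}(z)-(p-M)\,m_{2n0}(z)=-M/z$ and stipulating that $\mathcal{C}$ exclude the origin, you make visible the contour-placement requirement that the paper leaves implicit when it replaces $p\,m_{1n0}$ by $n\,\underline{m}_{1n0}$ inside the contour integral.
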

\begin{proof}
By Cauchy integral formula, 
$$ p\int f(x)dF^{c_{n},H_{n}}=-\dfrac{1}{2 \pi i}\oint_\mathcal{C}f(z)pm_{1n0}dz=-\dfrac{n}{2 \pi i}\oint_\mathcal{C}f(z)\underline{m}_{1n0}dz, $$ $$ (p-M)\int f(x)dF^{c_{nM},H_{2n}}=-\dfrac{1}{2 \pi i}\oint_\mathcal{C}f(z)pm_{2n0}dz=-\dfrac{n}{2 \pi i}\oint_\mathcal{C}f(z)\underline{m}_{2n0}dz, $$
where $m_{1n0} $ and $ m_{2n0}$ are respectively the stieltjes transform of $F^{c_{n},H_{n}} $ and $ F^{c_{nM},H_{2n}}$.
So $ (p-M)\int f(x)dF^{c_{nM},H_{2n}}-p\int f(x)dF^{c_{n},H_{n}}=\dfrac{n}{2 \pi i}\oint_\mathcal{C}f(z)\left(m_{1n0}-m_{2n0}\right) dz. $

Note that
$ m_{2n0} $ and $ m_{1n0} $ are respectively the unique solution to 
\begin{align}
	z=-\dfrac{1}{\underline{m}_{1n0}}+c_{n}\int\dfrac{tdH_{n}\left( t\right) }{1+t\underline{m}_{1n0}}\label{1}\\     
	z=-\dfrac{1}{\underline{m}_{2n0}}+c_{nM}\int\dfrac{tdH_{2n}\left( t\right) }{1+t\underline{m}_{2n0}} \label{2} 
\end{align}
where $ \underline{m}_{1n0}=-\dfrac{1-c_{n}}{z}+c_{n}m_{1n0} $, $ \underline{m}_{2n0}=-\dfrac{1-c_{nM}}{z}+c_{nM}m_{2n0} $. $ c_{n}=\dfrac{p}{n} $, $ c_{nM}=\dfrac{p-M}{n} $, $ H_{n}=F^{\bbU_{2}\bbD_{2}\bbU_{2}^{\ast}} $, $ H_{2n}=F^{\bbD_{2}}. $\\
Thus $  H_{n}=\frac{1}{p}\left[\sum_{i=1}^{M}\mathbbm{1}_{\left\lbrace 0\leq t\right\rbrace }+\sum_{i=M+1}^{p}\mathbbm{1}_{\left\lbrace \alpha_{i}\leq t\right\rbrace } \right]=\dfrac{M}{p}+\dfrac{1}{p}\sum_{i=M+1}^{p}\mathbbm{1}_{\left\lbrace \alpha_{i}\leq t\right\rbrace }  $, $ H_{2n}=\dfrac{1}{p-M}\sum_{i=M+1}^{p}\mathbbm{1}_{\left\lbrace\alpha_{i}\leq t \right\rbrace } $.\\
So (\ref{1}) can be written into 
\begin{align}
	\nonumber z&=-\dfrac{1}{\underline{m}_{1n0}}+\dfrac{p}{n}\int\dfrac{td\left( \dfrac{M}{p}+\dfrac{1}{p}\sum_{i=M+1}^{p}\mathbbm{1}_{\left\lbrace \alpha_{i}\leq t\right\rbrace }\left( t\right)\right)  }{1+t\underline{m}_{1n0}}\\	
	\nonumber&=-\dfrac{1}{\underline{m}_{1n0}}+\dfrac{p}{n}\int\dfrac{td\left( \dfrac{1}{p}\sum_{i=M+1}^{p}\mathbbm{1}_{\left\lbrace \alpha_{i}\leq t\right\rbrace }\left( t\right)\right)  }{1+t\underline{m}_{1n0}}\\
	&=-\dfrac{1}{\underline{m}_{1n0}}+\dfrac{1}{n}\sum_{i=M+1}^{p}\dfrac{\alpha_{i}}{1+\al_{i}\underline{m}_{1n0}}.\label{3}
\end{align}	
Similarly, the equation (\ref{2}) can be written into 
\begin{align}
	z=-\dfrac{1}{\underline{m}_{2n0}}+\dfrac{1}{n}\sum_{i=M+1}^{p}\dfrac{\al_{i}}{1+\al_{i}\underline{m}_{2n0}}.\label{4}
\end{align}	
So, from  (\ref{3})-(\ref{4})  we have 
\begin{align*}
	0&=\dfrac{1}{\underline{m}_{2n0}}-\dfrac{1}{\underline{m}_{1n0}}+\dfrac{1}{n}\sum_{i=M+1}^{p}\left( \dfrac{\al_{i}}{1+\al_{i}\underline{m}_{1n0}}-\dfrac{\al_{i}}{1+\al_{i}\underline{m}_{2n0}}\right) \\
	&=\dfrac{\underline{m}_{1n0}-\underline{m}_{2n0}}{\underline{m}_{1n0}\underline{m}_{2n0}}+\dfrac{1}{n}\sum_{i=M+1}^{p}\dfrac{\al_{i}^{2}\left( \underline{m}_{2n0}-\underline{m}_{1n0}\right) }{\left( 1+\al_{i}\underline{m}_{1n0}\right)\left( 1+\al_{i}\underline{m}_{2n0}\right)  }\\
	&=\dfrac{\underline{m}_{1n0}-\underline{m}_{2n0}}{\underline{m}_{1n0}\underline{m}_{2n0}}-\dfrac{1}{n}\sum_{i=M+1}^{p}\dfrac{\al_{i}^{2}\left( \underline{m}_{1n0}-\underline{m}_{2n0}\right) }{\left( 1+\al_{i}\underline{m}_{1n0}\right)\left( 1+\al_{i}\underline{m}_{2n0}\right)  }\\
\end{align*}	
Thus 
\begin{align*}
	\left(\underline{m}_{1n0}-\underline{m}_{2n0} \right)\left(\dfrac{1}{\underline{m}_{1n0}\underline{m}_{2n0}}-\dfrac{1}{n}\sum_{i=M+1}^{p}\dfrac{\al_{i}^{2}}{\left( 1+\al_{i}\underline{m}_{2n0}\right)\left( 1+\al_{i}\underline{m}_{2n0}\right)  } \right)  =0. 
\end{align*}
So $ \underline{m}_{1n0}-\underline{m}_{2n0}=0 $. Accordingly, $$(p-M)\int f(x)dF^{c_{nM},H_{2n}}-p\int f(x)dF^{c_{n},H_{n}}=\dfrac{n}{2 \pi i}\oint_\mathcal{C}f(z)\left(m_{1n0}-m_{2n0}\right) dz=0.$$ 
The proof is finished.
\end{proof}

Note that the non-spiked part $ \sum_{j=M+1}^{p}f\left(\lambda_{j} \right)  $ cannot use the conclusion of \cite{10.1214/aop/1078415845} or \cite{10.1214/14-AOS1292} directly, because the off-diagonal block $ \bbS_{12} $ and $ \bbS_{21} $ are not 0, while the linear spectral statistics generated by $ \bbS_{22} $ can use the conclusion of \cite{10.1214/aop/1078415845} or \cite{10.1214/14-AOS1292} directly, so in the following lemma we calculate the difference between  $ \sum_{j=M+1}^{p}f\left( \lambda_{j}\right)$ and $\sum_{j=1}^{p-M}f\left(\widetilde{\bgl_{j}} \right) $.\\

\begin{lemma}\label{lemma2}
\textit{The difference between}  $\sum_{j=M+1}^{p}f\left( \lambda_{j}\right) $  \textit{and}  $ \sum_{j=1}^{p-M}f\left( \widetilde{\bgl_{j}}\right)$ \textit{tends to}  $ \frac{M}{2\pi i}\oint_\mathcal{C}f\left(z \right)\dfrac{\underline{m}^{'}(z)}{\underline{m}(z)}dz$, \textit{where} $ f $ \textit{is an anlaytic function}. $ \underline{m}\left(z \right)  $ \textit{is the stieltjes transform of} $ \underline{F}^{c,H} $, \textit{which is the limiting spectral distribution of} $ \underline{\bbB} $. \textit{The coutour} $ \mathcal{C} $ \textit{encloses the support of} ${F}^{c,H} $.	
\end{lemma}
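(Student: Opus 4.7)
The plan is to rewrite both sums as contour integrals of resolvents, collapse their difference into a log-determinant identity via Schur's formula, and then extract the leading behavior using a resolvent identity together with concentration for quadratic forms.

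Since $\mathcal{C}$ encloses the bulk eigenvalues of $\bbS$ and of $\bbS_{22}$ but none of the $M$ spikes, Cauchy's integral formula yields
\begin{equation*}
\sum_{j=M+1}^{p}f(\lambda_j)-\sum_{j=1}^{p-M}f(\widetilde{\lambda}_j)=-\frac{1}{2\pi i}\oint_{\mathcal{C}}f(z)\bigl[\tr(\bbS-z\bbI)^{-1}-\tr(\bbS_{22}-z\bbI)^{-1}\bigr]dz.
\end{equation*}
Schur's identity $\det(\bbS-z\bbI)=\det(\bbS_{22}-z\bbI)\det\mathbf{K}(z)$ with $\mathbf{K}(z):=\bbS_{11}-z\bbI_M-\bbS_{12}(\bbS_{22}-z\bbI)^{-1}\bbS_{21}$, combined with $\tr(\bbA-z\bbI)^{-1}=-\tfrac{d}{dz}\log\det(\bbA-z\bbI)$, reduces the display to $\frac{1}{2\pi i}\oint_{\mathcal{C}}f(z)\frac{d}{dz}\log\det\mathbf{K}(z)\,dz$.

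To simplify $\mathbf{K}(z)$, set $\bbY_i:=\bbU_i^{\ast}\bbX$, so that $\bbS_{22}=n^{-1}\bbD_2^{1/2}\bbY_2\bbY_2^{\ast}\bbD_2^{1/2}$ and the companion $\underline{\bbS}_{22}:=n^{-1}\bbY_2^{\ast}\bbD_2\bbY_2$. With $\mathbf{R}:=n^{-1/2}\bbD_2^{1/2}\bbY_2$, the standard resolvent identity $\mathbf{R}^{\ast}(\mathbf{R}\mathbf{R}^{\ast}-z\bbI)^{-1}\mathbf{R}=\bbI_n+z(\mathbf{R}^{\ast}\mathbf{R}-z\bbI)^{-1}$ collapses the middle term of $\mathbf{K}(z)$ and gives
\begin{equation*}
\mathbf{K}(z)=-z\bigl[\bbI_M+n^{-1}\bbD_1^{1/2}\bbY_1(\underline{\bbS}_{22}-z\bbI)^{-1}\bbY_1^{\ast}\bbD_1^{1/2}\bigr].
\end{equation*}
Under Assumption 5 (which allows reducing to the case $\bbT_p^{\ast}\bbT_p$ diagonal, where $\bbY_1$ and $\bbY_2$ become independent), concentration for quadratic forms yields $n^{-1}\bbY_1(\underline{\bbS}_{22}-z\bbI)^{-1}\bbY_1^{\ast}=\underline{m}(z)\bbI_M+o_p(1)$ uniformly on $\mathcal{C}$, since $n^{-1}\tr(\underline{\bbS}_{22}-z\bbI)^{-1}\to\underline{m}(z)$ and $M/n\to 0$. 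Consequently $\det\mathbf{K}(z)\approx(-z)^M\prod_{j=1}^{M}(1+\al_j\underline{m}(z))$.

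Because each spike diverges, $\log(1+\al_j\underline{m}(z))=\log\al_j+\log\underline{m}(z)+O(1/\al_j)$ uniformly on $\mathcal{C}$, hence $\log\det\mathbf{K}(z)=M\log(-z)+M\log\underline{m}(z)+\sum_j\log\al_j+o(1)$. Differentiating in $z$ and substituting,
\begin{equation*}
\frac{1}{2\pi i}\oint_{\mathcal{C}}f(z)\frac{d}{dz}\log\det\mathbf{K}(z)\,dz\longrightarrow\frac{M}{2\pi i}\oint_{\mathcal{C}}f(z)\left[\frac{1}{z}+\frac{\underline{m}'(z)}{\underline{m}(z)}\right]dz.
\end{equation*}
Choosing $\mathcal{C}$ to avoid $z=0$, which is feasible since the support of $F^{c,H}$ stays bounded away from the origin under the standing assumptions, the $1/z$ contribution vanishes and the claimed limit follows. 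The principal obstacle is the uniform concentration estimate for $n^{-1}\bbY_1(\underline{\bbS}_{22}-z\bbI)^{-1}\bbY_1^{\ast}$ on $\mathcal{C}$: the resolvent is ill-conditioned as $z$ approaches the limiting bulk, and the dependence between $\bbY_1$ and $\bbY_2$ in the non-diagonal case must be handled through the $\beta_x=0$ branch of Assumption 5. Controlling the $O(1/\al_j)$ remainder for possibly slowly-diverging spikes likewise requires the H\"older-type continuity built into Assumption 6.
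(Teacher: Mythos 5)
Your proposal is correct and follows essentially the same route as the paper: write both sums as contour integrals of resolvent traces, isolate the Schur complement $\mathbf{K}(z)=\bbS_{11}-z\bbI_M-\bbS_{12}(\bbS_{22}-z\bbI)^{-1}\bbS_{21}$, simplify $\bbS_{12}(\bbS_{22}-z\bbI)^{-1}\bbS_{21}$ via the in-out (resolvent) exchange identity to obtain $\mathbf{K}(z)=-z\bigl[\bbI_M+n^{-1}\bbD_1^{1/2}\bbU_1^{\ast}\bbX(\underline{\bbS}_{22}-z\bbI)^{-1}\bbX^{\ast}\bbU_1\bbD_1^{1/2}\bigr]$, and replace the sandwiched quadratic form by $\underline{m}(z)\bbI_M$ via concentration and $M/n\to0$. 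The one genuine cosmetic difference is the bookkeeping: you pass to $\frac{d}{dz}\log\det\mathbf{K}(z)$ and expand $\log\det(-z(\bbI_M+\underline{m}(z)\bbD_1))$, whereas the paper computes the two traces $T_1=\tr\mathbf{K}(z)^{-1}$ and $T_2$ from the block-inverse expansion and shows $T_1\to0$ and $T_2\to M(\underline{m}+z\underline{m}')/(z\underline{m})$ separately; since $T_1-T_2=-\frac{d}{dz}\log\det\mathbf{K}(z)$, these are exactly the same computation. Your log-det version is slightly more economical and it also makes explicit the step the paper leaves implicit: the $M/z$ term must be killed by choosing $\mathcal{C}$ to avoid the origin (equivalently, $\oint_{\mathcal{C}}f(z)/z\,dz=0$), which is legitimate since the support of $F^{c,H}$ is bounded away from $0$. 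Two small caveats on your remarks: the $O(1/\al_j)$ remainder is controlled solely by Assumption 3 ($\al_j\to\infty$), not by the Hölder-type condition in Assumption 6 (which concerns the kernel $f$, not the spikes); and the independence of $\bbU_1^{\ast}\bbX$ and $\bbU_2^{\ast}\bbX$ in the diagonal branch of Assumption 5 is indeed what makes the concentration clean, a point the paper passes over but that you are right to flag as the real technical content.
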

\begin{proof}
We denote
\begin{align*}
	L_{1}\triangleq\sum_{j=M+1}^{p}f\left( \lambda_{j}\right).
\end{align*}
By Cauchy integral formula, we have
\begin{align*}
	L_{1}=-\frac{p}{2\pi i}\oint_{\mathcal C}f\left(z \right)m_{n}\left(z \right)dz	
\end{align*}
where $m_{n}=\frac{1}{p}\mathrm{tr}\left(\bbS-z\bbI_{p} \right)^{-1}=\frac{1}{p}\mathrm{tr}\left(\bbB-z\bbI_{p} \right)^{-1} $. Here the contour $ \mathcal{C} $ is closed, enclosing the support of ${F}^{c,H} $.
Similarly,
\begin{align*}
	L_{2}\triangleq\sum_{j=1}^{p-M}f\left( \widetilde{\bgl_{j}}\right)=-\frac{p-M}{2\pi i}\oint_{\mathcal C}f\left(z \right)m_{2n}\left(z \right)dz.	
\end{align*}
where $m_{2n}=\frac{1}{p-M}\mathrm{tr}\left(\bbS_{22}-z\bbI_{p} \right)^{-1} $. Thus, by using block matrix inversion method, we have 
\begin{align*}
	L_{1}-L_{2}=-\frac{1}{2\pi i}\oint_{\mathcal C}f\left(z \right)\left(T_{1}-T_{2} \right) dz.
\end{align*}
where 
\begin{align*}
	T_{1} &=\mathrm{tr}\left( \bbS_{11}-z\bbI_{M}-\bbS_{12}\left( \bbS_{22}-z\bbI_{p-M}\right)^{-1}\bbS_{21} \right)^{-1}\\ 	
	T_{2} &=-\mathrm{tr}\left(\bbS_{22}-zI_{p-M}\right) ^{-1}\bbS_{21}\left( \bbS_{11}-z\bbI_{M}-\bbS_{12}\left(\bbS_{22}-z\bbI_{p-M} \right)^{-1}\bbS_{21} \right)^{-1}\bbS_{12}\left(\bbS_{22}-zI_{p-M} \right)^{-1}   
\end{align*}
From some simple calculations, by using in-out exchange formula
\begin{align*}
	\bbZ\left(\bbZ^{\ast}\bbZ-\lambda \bbI \right)^{-1}\bbZ^{\ast}=\bbI+\lambda\left(\bbZ\bbZ^{\ast}-\lambda \bbI \right)^{-1}  	
\end{align*}	
we have 
\begin{align*}
	T_{1}&=-z^{-1}\mathrm{tr}\left(\bbI_{M}+\frac{1}{n}\bbD_{1}^{\frac{1}{2}}\bbU_{1}^{\ast}\bbX\left(\frac{1}{n}\bbX^{\ast}\bGma\bbX-z\bbI_{n}\right) ^{-1}  \bbX^{\ast}\bbU_{1}\bbD_{1}^{\frac{1}{2}} \right)^{-1}\stackrel{d}\longrightarrow-z^{-1}\mathrm{tr}\left(\bbD^{-1}\underline{m}^{-1}\left( z\right)  \right) 
	\stackrel{d}\longrightarrow0\\  
	T_{2} &=z^{-1}\mathrm{tr}\left(\bbI_{M}+\frac{1}{n}\bbD_{1}^{\frac{1}{2}}\bbU_{1}^{\ast}\bbX\left(\frac{1}{n}\bbX^{\ast}\bGma\bbX-z\bbI_{n}\right) ^{-1}  \bbX^{\ast}\bbU_{1}\bbD_{1}^{\frac{1}{2}} \right)^{-1}\dfrac{1}{n}\bbD_{1}^{\frac{1}{2}}\bbU_{1}^{\ast}\bbX\bbX^{\ast}\bbU_{2}\bbD_{2}^{\frac{1}{2}}\left(\dfrac{1}{n}\bbD_{2}^{\frac{1}{2}}\bbU_{2}^{\ast}\bbX\bbX^{\ast}\bbU_{2}\bbD_{2}^{\frac{1}{2}}-z\bbI_{p-M} \right)^{-2}\\
	&\dfrac{1}{n}\bbD_{2}^{\frac{1}{2}}\bbU_{2}^{\ast}\bbX\bbX^{\ast}\bbU_{1}\bbD_{1}^{\frac{1}{2}}\\
	&=z^{-1}\mathrm{tr}\bbD_{1}^{\frac{1}{2}}\left(\bbI_{M}+\frac{1}{n}\bbD_{1}^{\frac{1}{2}}\bbU_{1}^{\ast}\bbX\left(\frac{1}{n}\bbX^{\ast}\bGma\bbX-z\bbI_{n}\right) ^{-1}  \bbX^{\ast}\bbU_{1}\bbD_{1}^{\frac{1}{2}} \right)^{-1}\dfrac{1}{n}\bbD_{1}^{\frac{1}{2}}\bbU_{1}^{\ast}\bbX\bbX^{\ast}\bbU_{2}\bbD_{2}^{\frac{1}{2}}\left(\dfrac{1}{n}\bbD_{2}^{\frac{1}{2}}\bbU_{2}^{\ast}\bbX\bbX^{\ast}\bbU_{2}\bbD_{2}^{\frac{1}{2}}-z\bbI_{p-M} \right)^{-2}\\
	&\dfrac{1}{n}\bbD_{2}^{\frac{1}{2}}\bbU_{2}^{\ast}\bbX\bbX^{\ast}\bbU_{1}\\
	&=z^{-1}\mathrm{tr}\left(\bbD_{1}^{-1}+\frac{1}{n}\bbU_{1}^{\ast}\bbX\left(\frac{1}{n}\bbX^{\ast}\bGma\bbX-z\bbI_{n}\right) ^{-1}  \bbX^{\ast}\bbU_{1} \right)^{-1}\dfrac{1}{n}\bbU_{1}^{\ast}\bbX\bbX^{\ast}\bbU_{2}\bbD_{2}^{\frac{1}{2}}\left(\dfrac{1}{n}\bbD_{2}^{\frac{1}{2}}\bbU_{2}^{\ast}\bbX\bbX^{\ast}\bbU_{2}\bbD_{2}^{\frac{1}{2}}-z\bbI_{p-M} \right)^{-2}\\
	&\dfrac{1}{n}\bbD_{2}^{\frac{1}{2}}\bbU_{2}^{\ast}\bbX\bbX^{\ast}\bbU_{1}
\end{align*}
We denote the stieltjes transform of $ \bbX^{\ast}\bGma\bbX $ as $ \underline{m}_{2n0} $.
Because
\begin{align} 
	\nonumber&\frac{1}{n}\bbU_{1}^{\ast}\bbX\left(\frac{1}{n}\bbX^{\ast}\bGma\bbX-z\bbI_{n}\right) ^{-1}\bbX^{\ast}\bbU_{1}\\ 
	\nonumber&\approx \dfrac{1}{n}\mathrm{tr}\left(\frac{1}{n} \bbX^{\ast}\bGma\bbX-z\bbI_{n}\right)^{-1}E\left(\frac{1}{n}\bbX\bbX^{\ast}\bbU_{1}\bbU_{1}^{\ast} \right)\\
	\nonumber&=\dfrac{1}{n}\mathrm{tr}\left(\frac{1}{n} \bbX^{\ast}\bGma\bbX-z\bbI_{n}\right)^{-1}\bbI_{p}\\
	&=\underline{m}_{2n0}\left(z \right)\stackrel{d}\longrightarrow \underline{m}\left(z \right), \label{6}
\end{align}
thus
\begin{align} 
	\left(\bbD_{1}^{-1}+\frac{1}{n}\bbU_{1}^{\ast}\bbX\left(\frac{1}{n}\bbX^{\ast}\bGma\bbX-z\bbI_{n}\right) ^{-1}  \bbX^{\ast}\bbU_{1} \right)^{-1}\stackrel{d}\longrightarrow\underline{m}^{-1} \left(z \right).\label{7}
\end{align}
Because
\begin{align} 
	\nonumber&\dfrac{1}{n}\bbU_{1}^{\ast}\bbX\bbX^{\ast}\bbU_{2}\bbD_{2}^{\frac{1}{2}}\left(\dfrac{1}{n}\bbD_{2}^{\frac{1}{2}}\bbU_{2}^{\ast}\bbX\bbX^{\ast}\bbU_{2}\bbD_{2}^{\frac{1}{2}}-z\bbI_{p-M} \right)^{-2}
	\dfrac{1}{n}\bbD_{2}^{\frac{1}{2}}\bbU_{2}^{\ast}\bbX\bbX^{\ast}\bbU_{1}\\
	\nonumber&\approx\dfrac{1}{n}\mathrm{tr}\left(\bbX^{\ast}\bbU_{2}\bbD_{2}^{\frac{1}{2}}\left(\dfrac{1}{n}\bbD_{2}^{\frac{1}{2}}\bbU_{2}^{\ast}\bbX\bbX^{\ast}\bbU_{2}\bbD_{2}^{\frac{1}{2}}-z\bbI_{p-M} \right)^{-2}
	\dfrac{1}{n}\bbD_{2}^{\frac{1}{2}}\bbU_{2}^{\ast}\bbX \right)\\
	\nonumber&=\dfrac{1}{n}\mathrm{tr}\left( \left(\dfrac{1}{n}\bbD_{2}^{\frac{1}{2}}\bbU_{2}^{\ast}\bbX\bbX^{\ast}\bbU_{2}\bbD_{2}^{\frac{1}{2}}-z\bbI_{p-M} \right)^{-2}\dfrac{1}{n}\bbD_{2}^{\frac{1}{2}}\bbU_{2}^{\ast}\bbX\bbX^{\ast}\bbU_{2}\bbD_{2}^{\frac{1}{2}}\right) \\
	\nonumber&=\dfrac{1}{n}\mathrm{tr}\left(  \left( \dfrac{1}{n}\bbD_{2}^{\frac{1}{2}}\bbU_{2}^{\ast}\bbX\bbX^{\ast}\bbU_{2}\bbD_{2}^{\frac{1}{2}}-z\bbI_{p-M} \right)^{-2}\left(  \dfrac{1}{n}\bbD_{2}^{\frac{1}{2}}\bbU_{2}^{\ast}\bbX\bbX^{\ast}\bbU_{2}\bbD_{2}^{\frac{1}{2}}-z\bbI_{p-M}+z\bbI_{p-M} \right)\right) \\
	\nonumber&=\dfrac{1}{n}\mathrm{tr} \left( \dfrac{1}{n}\bbD_{2}^{\frac{1}{2}}\bbU_{2}^{\ast}\bbX\bbX^{\ast}\bbU_{2}\bbD_{2}^{\frac{1}{2}}-z\bbI_{p-M} \right)^{-1}+\dfrac{z}{n}\mathrm{tr} \left( \dfrac{1}{n}\bbD_{2}^{\frac{1}{2}}\bbU_{2}^{\ast}\bbX\bbX^{\ast}\bbU_{2}\bbD_{2}^{\frac{1}{2}}-z\bbI_{p-M} \right)^{-2} \\
	\nonumber&=\dfrac{1}{n}\mathrm{tr}\left(\bbS_{22}-z\bbI_{p-M} \right)^{-1}+\dfrac{z}{n}\mathrm{tr}\left(\bbS_{22}-z\bbI_{p-M} \right)^{-2}\\
	\nonumber&=\dfrac{p-M}{n}\dfrac{1}{p-M} \mathrm{tr}\left(\bbS_{22}-z\bbI_{p-M}\right) ^{-1}+z\dfrac{p-M}{n}\dfrac{1}{p-M} \mathrm{tr}\left(\bbS_{22}-z\bbI_{p-M} \right)^{-2}\\
	&\stackrel{d}\longrightarrow cm\left(z \right)+zcm'\left(z \right)
	=\underline{m}\left(z \right)+z\underline{m}'\left(z \right), \label{8} 
\end{align}
where the last equality is true as $ \underline{m}=-\dfrac{1-c}{z}+cm\left(z \right)  $.\\
Therefore, by (\ref{7}) and (\ref{8}),
\begin{align*}
	T_{2}\stackrel{d}\longrightarrow\dfrac{\underline{m}\left(z \right)+z\underline{m}'\left(z \right)}{z\underline{m}\left(z \right)}M.
\end{align*}
Thus
\begin{align*}
	T_{1}-T_{2}\stackrel{d}\longrightarrow-\dfrac{\underline{m}\left(z \right)+z\underline{m}'\left(z \right)}{z\underline{m}\left(z \right)}M.	
\end{align*}		
Accordingly 
\begin{align*}
	L_{1}-L_{2}\stackrel{d}\longrightarrow\frac{M}{2\pi i}\oint_{\mathcal C}f\left(z \right)\dfrac{\underline{m}(z)+z\underline{m}^{'}(z)}{z\underline{m}(z)}dz=\frac{M}{2\pi i}\oint_{\mathcal C}f\left(z \right)\dfrac{\underline{m}^{'}(z)}{\underline{m}(z)}dz.
\end{align*}
Thus the proof is completed.
\end{proof}

The following lemma derives the explicit asymptotic distribution of  linear spectral statiatics generated from sub-matrix $ \bbS_{22} $.

\begin{lemma}(\cite{10.1214/14-AOS1292})\label{lemma3}
\textit{Define} $ \boldsymbol Q_{n}\left( f_{l}\right)=\sum_{j=1}^{p-M}f_{l}\left( \widetilde{\bgl_{j}}\right)-(p-M)\int f_{l}(x)dF^{c_{nM},H_{2n}},  $
\textit{then the limiting distribution of} $ \left(\boldsymbol Q_{n}\left( f_{1}\right),\cdots,\boldsymbol Q_{n}\left( f_{h}\right) \right) $  \textit{is Gaussian distribution, with mean function}
\begin{align*}
	E\boldsymbol Q_{f}&=
	-\frac{\alpha_{x}}{2 \pi i}\cdot\oint_{\mathcal{C}}f(z)\dfrac{  c \int \underline{m}^{3}(z)t^{2}\left(1+t \underline{m}(z)\right)^{-3} d H(t)}{\left(1-c \int \frac{\underline{m}^{2}(z) t^{2}}{\left(1+t \underline{m}(z)\right)^{2}} d H(t)\right)\left(1-\alpha_{x} c \int \frac{\underline{m}^{2}(z) t^{2}}{\left(1+t \underline{m}(z)\right)^{2}} d H(t)\right) }dz \\
	&-\frac{\beta_{x}}{2 \pi i} \cdot \oint_{\mathcal{C}} f(z) \frac{c \int \underline{m}^{3}(z) t^{2}\left(1+t \underline{m}(z)\right)^{-3} d H(t)}{1-c \int \underline{m}^{2}(z) t^{2}\left(1+t \underline{m}(z)\right)^{-2} d H(t)} dz
\end{align*}
and covariance function
\begin{align*}
	Cov\left(\boldsymbol Q_{f_{m}}, \boldsymbol Q_{f_{l}}\right)
	&=-\frac{1}{4 \pi^{2}} \oint_{\mathcal{C}_{1}} \oint_{\mathcal{C}_{2}} \frac{f_{m}\left(z_{1}\right) f_{l}\left(z_{2}\right)}{\left(\underline{m} {\left.\left(z_{1}\right)-\underline{m}\left(z_{2}\right)\right)^{2}}\right.} d \underline{m}\left(z_{1}\right) d \underline{m}\left(z_{2}\right) 
	-\frac{c \beta_{x}}{4 \pi^{2}} \oint_{\mathcal{C}_{1}} \oint_{\mathcal{C}_{2}} f_{m}\left(z_{1}\right) f_{l}\left(z_{2}\right)\\
	&\left[\int \frac{t}{\left(\underline{m}\left(z_{1}\right) t+1\right)^{2}}\right.
	\left.\times \frac{t}{\left(\underline{m}\left(z_{2}\right) t+1\right)^{2}} d H(t)\right] d \underline{m}\left(z_{1}\right) d \underline{m}\left(z_{2}\right)\\
	&\quad-\frac{1}{4 \pi^{2}} \oint_{\mathcal{C}_{1}} \oint_{\mathcal{C}_{2}} f_{m}\left(z_{1}\right) f_{l}\left(z_{2}\right)\left[\frac{\partial^{2}}{\partial z_{1} \partial z_{2}} \log \left(1-a\left(z_{1}, z_{2}\right)\right)\right] d z_{1} d z_{2}
\end{align*} 
where $ \underline{m}(z) $ is the stieltjes transform of $ \underline{F}^{c,H} $, $\mathcal{C}, \mathcal{C}_{1}$ \textit{and} $\mathcal{C}_{2}$ \textit{are closed contours in the complex plan enclosing the support of the LSD} $ F^{y, H}$, \textit{and} $\mathcal{C}_{1}$ \textit{and }$\mathcal{C}_{2}$ \textit{being nonoverlapping. Finally, the function} $a\left(z_{1}, z_{2}\right)$ \textit{is}
$$
a\left(z_{1}, z_{2}\right)=\alpha_{x}\left(1+\frac{\underline{m}\left(z_{1}\right) \underline{m}\left(z_{2}\right)\left(z_{1}-z_{2}\right)}{\underline{m}\left(z_{2}\right)-\underline{m}\left(z_{1}\right)}\right)
$$
Here $ \beta_{x}= E\left|x_{ij}\right| ^{4}- \left|Ex_{ij}^{2}\right|^{2}-2, \alpha_{x}=\left|Ex_{ij}^{2}\right|^{2} $.
\end{lemma}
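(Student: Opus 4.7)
The plan is to recognize $\bbS_{22}$ as a generalized sample covariance matrix whose population covariance has bounded spectral norm, and then invoke the CLT of \cite{10.1214/14-AOS1292} directly. By the cyclic property of eigenvalues, the spectrum of $\bbS_{22} = \frac{1}{n}\bbD_{2}^{1/2}\bbU_{2}^{\ast}\bbX\bbX^{\ast}\bbU_{2}\bbD_{2}^{1/2}$ coincides, up to $M$ additional zeros, with the spectrum of the $p\times p$ matrix $\widetilde{\bbB} = \frac{1}{n}\bSi_{0}^{1/2}\bbX\bbX^{\ast}\bSi_{0}^{1/2}$, where $\bSi_{0} = \bbU_{2}\bbD_{2}\bbU_{2}^{\ast}$ has spectral norm equal to $\|\bbD_{2}\|$ (bounded) and empirical spectral distribution $\tfrac{M}{p}\delta_{0} + \tfrac{p-M}{p}H_{2n}$, converging weakly to $H$ since $M/p \to 0$. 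Thus $\widetilde{\bbB}$ falls precisely inside the framework of bounded-spectrum generalized sample covariance matrices treated in \cite{10.1214/14-AOS1292}.

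Next, I would verify that our Assumptions 1--6 translate into the hypotheses of the Pan--Zhou CLT. Assumption 1 supplies the Lindeberg-type truncation condition on $x_{ij}$; Assumption 2 combined with Assumption 3's bound $M/n = o(1)$ yields $c_{nM} = (p-M)/n \to c$; Assumption 4 matches the required second-order moment condition ($\bbTp$ real or $\alpha_{x}=0$), Assumption 5 matches the fourth-order moment condition ($\bbTp^{\ast}\bbTp$ diagonal or $\beta_{x}=0$), and Assumption 6 furnishes the analyticity of each $f_{l}$ on a neighborhood of the support of $F^{c,H}$. The limiting Stieltjes transform $\underline{m}(z)$ of $\underline{F}^{c,H}$ then exists as the unique solution of the associated Marchenko--Pastur equation for the pair $(c,H)$.

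With the setup in place, the CLT of \cite{10.1214/14-AOS1292} applied to $\widetilde{\bbB}$ delivers the claimed joint Gaussian limit for $(\boldsymbol Q_{n}(f_{1}),\ldots,\boldsymbol Q_{n}(f_{h}))$. Concretely, using the Cauchy representation
\begin{equation*}
\boldsymbol Q_{n}(f_{l}) = -\frac{1}{2\pi i}\oint_{\mathcal{C}} f_{l}(z)\,(p-M)\bigl[m_{2n}(z) - m_{2n0}(z)\bigr]\, dz
\end{equation*}
and their Stieltjes-transform-level CLT for $(p-M)[m_{2n} - m_{2n0}]$, the limiting mean and covariance kernels specialize to the formulas displayed in the statement, in terms of $\underline{m}(z)$, $H$, $\alpha_{x}$, and $\beta_{x}$; the function $a(z_{1},z_{2})$ arises as the cross-covariance contribution when $\alpha_{x}\neq 0$. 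The principal technical burden---truncation of $x_{ij}$, the martingale-difference decomposition of $m_{n}-\E m_{n}$, and the resolvent computations producing the explicit kernels---is already carried out in \cite{10.1214/14-AOS1292}, so the hard part is not redone here. The two remaining checks, both mild under $M/p \to 0$, are that the $M$ spurious zero eigenvalues of $\widetilde{\bbB}$ do not perturb the limiting distribution, and that centering against $F^{c_{nM},H_{2n}}$ is asymptotically equivalent at the $O(1)$ scale to the Pan--Zhou centering against the LSD $F^{c,H}$.
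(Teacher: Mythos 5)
Your proposal is correct and takes essentially the same route as the paper: the paper's entire proof of this lemma is a one-line appeal to \cite{10.1214/14-AOS1292}, which is exactly the reduction-plus-citation you carry out. The extra bookkeeping you supply (passing to $\frac{1}{n}\bSi_{0}^{1/2}\bbX\bbX^{\ast}\bSi_{0}^{1/2}$ with $\bSi_{0}=\bbU_{2}\bbD_{2}\bbU_{2}^{\ast}$ to get a bona fide bounded-spectrum model, and the centering/zero-eigenvalue checks, the former being precisely the paper's Lemma 5.1) only makes explicit details the paper leaves implicit.
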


\begin{proof}
It is a directly result of \cite{10.1214/14-AOS1292}, thus we do not show the details here.
\end{proof}
%\textbf{Lemma 4}
%\textit{The limiting distriution of} $ \sum_{j=M+1}^{p}f\left( \bgl_{j}\right)-L_{1n0}^{\ast\ast} $ \textit{is Gaussian distribution with mean function}
%\begin{align*}
%E\bbX_{f}=-\frac{M}{\pi i}\int_{a}^{b}f'\left(x \right)ln\dfrac{cosArg\underline{m}\left( x\right)+isinArg\underline{m}\left( x\right)}{cosArg\underline{m}\left( x\right)-isinArg\underline{m}\left( x\right)}dx
%\end{align*}
%\textit{and covariance function}
%\begin{align*}
%Cov\left(\bbX_{f},\bbX_{g} \right)=-\dfrac{1}{4\pi^{2}}\int\int\dfrac{f\left(z_{1} \right)g\left(z_{2} \right)  }{\left( \underline{m}\left( z_{1}\right)-\underline{m}\left( z_{2}\right)\right) ^{2}}\frac{d}{dz_{1}}\underline{m}\left( z_{1}\right)\frac{d}{dz_{2}}\underline{m}\left( z_{2}\right)dz_{1}dz_{2} 
%\end{align*}
%\textit{where} $ f $ \textit{and} $ g $ \textit{are anlaytic functions}, $ \underline{m}\left(z \right)  $ \textit{is the stieltjes transform of} $ F^{c,H} $, \textit{the limiting spectral distribution of} $ \bbS $.

%\textbf{Proof}
%Because
%\begin{align*}
%&\sum_{j=M+1}^{p}f\left( \bgl_{j}\right)-L_{1n0}^{\ast\ast}\\
%&=\sum_{j=1}^{p-M}f\left( \widetilde{\bgl_{j}}\right)-L_{2n0}+\sum_{j=M+1}^{p}f\left( \bgl_{j}\right)-\sum_{j=1}^{p-M}f\left( \bgl_{j}\right)+
%L_{2n0}-L_{1n0}^{\ast\ast}
%\end{align*}
%By lemma 1,2,3, we have the finial limiting distribution of $ \sum_{j=M+1}^{p}f\left( \bgl_{j}\right)-L_{1n0}^{\ast\ast} $, thus the proof is completed.\\

The following lemma, borrowing from \cite{10.3150/20-BEJ1237}, characterizes the limiting distribution of spiked eigenvalues of sample covariance matrix.
\begin{lemma} (\citep{10.3150/20-BEJ1237})\label{lemma4}
\textit{ Define random vector} $ \gamma_{k}=\left( \gamma_{kj} \right)' = \left( \sqrt{n}\cdot \dfrac{\lambda_{j}-\phi_{n}\left(\al_{k} \right) }{\phi_{n}\left(\al_{k} \right)},    j\in J_{k} \right)'$ ,  \textit{where} $ \phi_{n}\left( \al_{k}\right) $ \textit{has the same form as} $ \phi_{k} $ \textit{but with} $ c_{n},H_{n} $ \textit{instead of} $ c,H $. \textit{Then random vector} $ \gamma_{k} $ \textit{converges weakly to the joint distribution of $ m_{k} $ eigenvalues of Gaussian random matrix} $$ -\dfrac{1}{\theta_{k}}\left[\bgO_{\phi_{k}} \right]_{kk}   $$
\textit{where} 
$$\theta_{k}=\phi_{k}^{2}\underline{m}_{2}\left( \phi_{k}\right), \underline{m}_{2}\left( \lambda\right)=\int\dfrac{1}{\left( \lambda-x\right) ^{2}}d\underline{F}\left( x\right)   $$ \textit{with} $ \underline{F} $ \textit{being the LSD of matrix} $ n^{-1}\bbX^{\ast}\bGma\bbX,$ $ \phi_{k}=\phi\left(x \right)\mid_{x=\al_{k}}=\al_{k}\left(1+c\int\dfrac{t}{\al_{k}-t}dH\left(t \right)  \right)  $.
$ \left[\bgO_{\phi_{k}} \right]_{kk} $ \textit{is the} $ k $\textit{th diagonal block of matrix} $ \bgO_{\phi_{k}} $. \textit{The variances and covariances of the elements} $ \omega_{ij} $ \textit{of} $ \bgO_{\phi_{k}} $ \textit{is:}
$$
\operatorname{Cov}\left(\omega_{i_{1}, j_{1}}, \omega_{i_{2}, j_{2}}\right)=\left\{\begin{array}{cc}
	(q+1) \theta_{k}+\pi_{x, i i i i} \nu_{k}, & i_{1}=j_{1}=i_{2}=j_{2}=i \\
	\theta_{k}+\pi_{x, i j i j} \nu_{k}, & i_{1}=i_{2}=i \neq j_{1}=j_{2}=j \\
	\pi_{x, i_{1} j_{1} i_{2} j_{2}} \nu_{k}, & \text { other cases }
\end{array}\right.
$$
\textit{where} $\pi_{x, i_{1} j_{1} i_{2} j_{2}}=\lim \sum_{t=1}^{p} \bar{u}_{t i_{1}} u_{t j_{1}} u_{t i_{2}} \bar{u}_{t j_{2}} \mathrm{E}\left\{\left|x_{11}\right|^{4} I\left(\left|x_{11}\right| \leq \sqrt{n}\right)-2-q\right\}$, $\mathbf{u}_{i}=\left(u_{1 i}, \ldots, u_{p i}\right)^{\prime}$ \textit{are the} $i$ \textit{th column of the matrix} $\mathbf{U}_{1}$, $\nu_{k}=\phi_{k}^{2} \underline{m}^{2}\left(\phi_{k}\right)$, \textit{and} $ q=1 $ \textit{for the real case}, $ q=0 $ \textit{for the complex case}.
\end{lemma}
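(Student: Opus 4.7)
The plan is to characterize the fluctuations of the spiked eigenvalues $\lambda_j$ (for $j\in J_k$) around their asymptotic locations $\phi_n(\al_k)$ by reducing the full eigenvalue problem for $\bbB$ to a finite-dimensional random matrix problem of size $m_k\times m_k$. First, using the block decomposition of $\bbS$ set up in Section 5.1 and the fact that the spiked eigenvalues $\lambda_j$ separate from the bulk spectrum with high probability (so $\det(\bbS_{22}-\lambda_j\bbI)\ne 0$), the Schur complement identity reduces $\det(\bbS-\lambda\bbI)=0$ to
\begin{equation*}
\det\bigl(\bbS_{11}-\lambda\bbI-\bbS_{12}(\bbS_{22}-\lambda\bbI)^{-1}\bbS_{21}\bigr)=0,
\end{equation*}
and the in-out exchange identity (used already in the proof of Lemma \ref{lemma2}) rewrites this as $\det(\bbD_1^{-1}+K_n(\lambda))=0$, where
\begin{equation*}
K_n(\lambda):=\frac{1}{n}\bbU_1^{\ast}\bbX\Bigl(\frac{1}{n}\bbX^{\ast}\bGma\bbX-\lambda\bbI_n\Bigr)^{-1}\bbX^{\ast}\bbU_1.
\end{equation*}

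Next I would establish the deterministic first-order picture. By the trace concentration computation at (\ref{6}), $\rE K_n(\lambda)=\underline{m}_{2n0}(\lambda)\bbI_M+o(1)$, so the zeroth-order equation $\det(\bbD_1^{-1}+\underline{m}(\lambda)\bbI_M)=0$ restricted to the $k$-th diagonal block yields $\al_k^{-1}+\underline{m}(\lambda)=0$, which by the definition of $\phi$ is equivalent to $\lambda=\phi_k$. Since Assumption 3 gives $\phi'(\al_k)>0$, one can isolate a small contour around $\phi_k$ that encloses exactly the $m_k$ eigenvalues $\{\lambda_j:j\in J_k\}$ but none of the other $\phi_{k'}$.

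Then comes the fluctuation analysis. Write $\lambda_j=\phi_n(\al_k)(1+\gamma_{kj}/\sqrt{n})$ for $j\in J_k$ and Taylor expand $K_n(\lambda_j)$ around $\phi_n(\al_k)$. The key probabilistic input is a CLT for the rescaled random matrix
\begin{equation*}
\bgO_{\phi_k}:=\sqrt{n}\,\phi_k\bigl(K_n(\phi_k)-\rE K_n(\phi_k)\bigr),
\end{equation*}
which I would prove via a martingale-difference decomposition along the columns of $\bbX$ together with careful moment bookkeeping. The three-case covariance structure of the entries $\omega_{ij}$ of $\bgO_{\phi_k}$ stated in the lemma (diagonal, matched off-diagonal pair, general case) follows from the way fourth-order products of the $x_{ij}$ combine with the coefficients $u_{ti}$ of $\bbU_1$: the Gaussian part of the fourth moment produces $(q+1)\theta_k$ or $\theta_k$ (depending on whether a diagonal entry is involved), while the excess cumulant $\rE|x_{11}|^4-2-q$ generates the $\pi_{x,i_1 j_1 i_2 j_2}\nu_k$ correction.

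Finally, combining the Taylor expansion $\underline{m}(\lambda_j)\approx \underline{m}(\phi_k)+(\lambda_j-\phi_k)\underline{m}_2(\phi_k)$ with the rescaled block-$k$ equation yields, at leading order,
\begin{equation*}
\det\Bigl((\lambda_j-\phi_k)\underline{m}_2(\phi_k)\bbI_{m_k}+\frac{1}{\sqrt{n}\,\phi_k}[\bgO_{\phi_k}]_{kk}\Bigr)\approx 0,
\end{equation*}
so $\gamma_{kj}=\sqrt{n}(\lambda_j-\phi_k)/\phi_k$ are precisely the eigenvalues of $-[\bgO_{\phi_k}]_{kk}/\theta_k$ with $\theta_k=\phi_k^2\underline{m}_2(\phi_k)$, giving the stated convergence. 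The main obstacle is controlling the perturbation expansion uniformly when $\bbD_1$ is unbounded: one must rescale consistently so that all relevant random quantities have bounded limits, and verify that Taylor remainders are $o(1/\sqrt{n})$ uniformly on the small contour around $\phi_k$. A secondary delicate point is the CLT for $\bgO_{\phi_k}$ itself, specifically the combinatorial bookkeeping that separates the three covariance cases against the geometry of $\bbU_1$ and the real/complex parameter $q$.
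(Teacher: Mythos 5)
The paper does not prove Lemma~\ref{lemma4}; it imports it verbatim from \cite{10.3150/20-BEJ1237} (the lemma header carries the citation), and the only in-paper trace of the underlying argument is the secular equation $0=\left|[\bgO_M(\phi_k)]_{kk}+\lim\gamma_{kj}\{\phi_k^2\underline{m}_2(\phi_k)\}\bbI_{m_k}\right|$ quoted inside the proof of Lemma~\ref{lemma5}. Your sketch reconstructs exactly the strategy that this quoted equation presupposes: Schur-complement reduction of $\det(\bbS-\lambda\bbI)=0$ to the block equation $\det(\bbD_1^{-1}+K_n(\lambda))=0$, the deterministic root from $\al_k^{-1}+\underline{m}(\phi_k)=0$, linearization via $\underline{m}'=\underline{m}_2$, and a CLT for the rescaled quadratic form $\bgO_{\phi_k}$. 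Two small points of alignment with the reference: it centers its $\bgO$ by the random scalar $\frac{1}{n}\mathrm{tr}\{(\phi_k\bbI-\frac{1}{n}\bbX^{\ast}\bGma\bbX)^{-1}\}\bbI$ rather than by the deterministic $\rE K_n(\phi_k)$ --- the two are asymptotically equivalent here because the trace fluctuation, once scaled by $\phi_k/\sqrt{n}$, still vanishes for divergent $\phi_k$ (the kernel $1/(\phi_k-x)$ itself decays), but the random-trace centering is the cleaner choice for bookkeeping the three covariance cases --- and you should linearize about $\phi_n(\al_k)$ rather than about $\phi_k$, so that the $O(1/\sqrt{n})$ bias between the two is absorbed as the lemma's definition of $\gamma_{kj}$ requires. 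Since the delicate steps you yourself flag (the martingale CLT for $\bgO_{\phi_k}$, the combinatorics separating the diagonal, matched-pair, and generic covariance cases, and the uniform error control for unbounded $\bbD_1$) are not carried out but are delegated entirely to \cite{10.3150/20-BEJ1237}, what you have written is a reasonable reconstruction of that reference's proof rather than something to be compared against an in-paper argument.
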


Recall $ \lambda_{j} $ is the eigenvalue of $ \bbB $, $ \widetilde{\bgl_{j}} $ is the eigenvalue of $ \bbS_{22} $. The following lemma shows the independence between $ \sum_{j=1}^{M}f\left( \lambda_{j}\right) $ and $ \sum_{j=1}^{p-M}f\left(\widetilde{\bgl_{j}} \right) $.

\begin{lemma}\label{lemma5}
\textit{The spiked linear spectral statistics} $ \sum_{j=1}^{M}f\left( \lambda_{j}\right) $ \textit{and the bulk linear spectral statistics} $ \sum_{j=1}^{p-M}f\left(\widetilde{\bgl_{j}} \right) $ \textit {are asymptotically independent.}
\end{lemma}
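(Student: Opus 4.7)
The plan is to show that the joint distribution of the centered spiked statistic $X_n^{(1)}$ and the centered bulk statistic $X_n^{(2)}$ converges to a bivariate Gaussian with zero cross-covariance; since zero covariance implies independence for jointly Gaussian limits, this yields the claim. The marginal CLTs are already supplied by Lemma \ref{lemma4} and Lemma \ref{lemma3} respectively, so the task reduces to establishing joint Gaussianity together with asymptotic decorrelation.

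First I would linearize the spiked statistic via Taylor expansion around each $\phi_n(\al_k)$, valid by Assumption 6:
\begin{align*}
\sum_{j=1}^{M} f(\lambda_j) - \sum_{k=1}^{K} m_k f(\phi_n(\al_k))
= \sum_{k=1}^{K} \frac{\phi_n(\al_k) f'(\phi_n(\al_k))}{\sqrt{n}} \sum_{j \in J_k} \gamma_{kj} + o_P(1).
\end{align*}
By Lemma \ref{lemma4} the fluctuations on the right are driven by $\mathrm{tr}([\bgO_{\phi_k}]_{kk})$, and each entry $\omega_{ij}$ is, up to negligible error, a centered bilinear form of the type $\frac{1}{\sqrt{n}}\bigl(\bbU_1^{\ast}\bbX\, K(\phi_k;\bbY_2)\, \bbX^{\ast}\bbU_1\bigr)_{ij}$, where $K$ is built from the resolvent of $\bbS_{22}$ and therefore depends only on $\bbY_2 := \bbU_2^{\ast}\bbX$. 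Meanwhile, by Lemmas \ref{lemma3} and \ref{lemma1}, the centered bulk statistic is itself a functional of $\bbY_2$ alone, since its only source of randomness is $\bbS_{22}$.

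Next, I would evaluate the cross-covariance by conditioning on $\bbY_2$. The key structural fact is $\bbU_1^{\ast}\bbU_2 = 0$, which makes the rows of $\bbY_1 := \bbU_1^{\ast}\bbX$ and of $\bbY_2$ uncorrelated. Under Assumption 4 with Gaussian-like second moments, this orthogonality lifts to (asymptotic) independence of $\bbY_1$ and $\bbY_2$ at the bilinear-form level, giving $E[\omega_{ij}\mid \bbY_2] \to 0$ in mean square. Consequently
\begin{align*}
\mathrm{Cov}\bigl(\omega_{ij},\, h(\bbY_2)\bigr)
= E\bigl[E[\omega_{ij}\mid \bbY_2]\, h(\bbY_2)\bigr] = o(1)
\end{align*}
for any smooth bounded functional $h$ of moderate growth. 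Integrating out $\bbY_2$ and invoking the marginal CLTs through the L\'evy continuity theorem, the joint characteristic function factorizes into the product of the two marginal Gaussian characteristic functions, establishing asymptotic independence.

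The main obstacle lies in the non-Gaussian regime permitted by Assumption 5 with $\beta_x \neq 0$, where $\bbY_1$ and $\bbY_2$ are merely uncorrelated but not independent, so $E[\omega_{ij}\mid \bbY_2]$ is not automatically zero. Resolving this requires a fourth-moment computation: truncating the $x_{ij}$ at level $\eta\sqrt{n}$ via the Lindeberg-type condition of Assumption 1, then expanding the moments of the bilinear form, and observing that all non-vanishing index contractions localize inside the $\bbY_1$-block or inside the $\bbY_2$-block because of the orthogonality $\bbU_1^{\ast}\bbU_2 = 0$. Mixed contractions, which are the ones that could potentially carry the non-Gaussian correction $\beta_x$ across the two blocks, are shown to contribute only $O(M/n) = o(1)$ under Assumption 3. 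This block-decoupling of fourth-order moments is the main technical hurdle of the argument.
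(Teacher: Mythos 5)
Your approach is the same in spirit as the paper's: both proofs linearize the spiked statistic through the Taylor/Newton--Leibniz step justified by Assumption 6, then condition on the bulk randomness, and exploit the fact that the spiked fluctuations $\gamma_{kj}$ are driven by a bilinear form $\bbY_1 R(\phi_k) \bbY_1^{\ast}$ with $\bbY_1=\bbU_1^{\ast}\bbX$ and a kernel $R(\phi_k)=(\phi_k\bbI-\tfrac{1}{n}\bbY_2^{\ast}\bbD_2\bbY_2)^{-1}$ that depends only on $\bbY_2=\bbU_2^{\ast}\bbX$. The paper phrases this by quoting the spectral equation of \cite{10.3150/20-BEJ1237}, whereas you condition on $\bbY_2$ directly; these are the same conditioning argument. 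Two points in your write-up need repair.

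First, the step from ``$E[\omega_{ij}\mid\bbY_2]\to 0$ in mean square'' to ``the joint characteristic function factorizes'' is a leap. Vanishing of the \emph{conditional mean} (equivalently, vanishing first-order cross-covariance with functionals of $\bbY_2$) does not by itself give decorrelation of $e^{it\omega_{ij}}$ with $e^{ish(\bbY_2)}$, which is what factorization requires. What is actually needed, and what Lemma~\ref{lemma4} (via \cite{10.3150/20-BEJ1237}) supplies, is that the \emph{full conditional law} of the spiked bilinear form given $\bbY_2$ converges to a fixed Gaussian law whose parameters depend only on the deterministic limit of $R(\phi_k)$, not on the realized fluctuations of $\bbY_2$. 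With that in hand, the argument the paper sketches (``given the bulk, the conditional limiting law of the spike is nonrandom'') does deliver independence. You should replace the covariance computation by the conditional CLT for the bilinear form $\bbY_1 R(\phi_k)\bbY_1^{\ast}$ given $\bbY_2$.

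Second, the ``main obstacle'' you flag --- fourth-moment cross-contractions when $\beta_x\neq 0$, with $\bbY_1$ and $\bbY_2$ merely uncorrelated --- is in fact vacuous under the paper's assumptions. Assumption~5 stipulates that if $\beta_x\neq 0$ then $\bbTp^{\ast}\bbTp$ is diagonal; in that case $\bbU$ may be taken to be a permutation, so $\bbY_1$ and $\bbY_2$ are disjoint blocks of rows of $\bbX$ and hence \emph{exactly} independent, not merely uncorrelated. The delicate $O(M/n)$ mixed-contraction estimate you propose is therefore unnecessary: the non-Gaussian fourth-moment case is precisely where the assumption restores full independence. Recognizing this removes most of the technical weight you placed on the final paragraph and brings your proof into alignment with the paper's.
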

\begin{proof}
Firstly we consider the indenpendence between $ \sum_{j=1}^{M}f\left( \lambda_{j}\right) $  and $ \sum_{j=M+1}^{p}f\left( \lambda_{j}\right) $. If we can prove that given $ \sum_{j=M+1}^{p}f\left( \lambda_{j}\right) $, the asymptotic limiting distribution of the spiked part $ \sum_{j=1}^{M}f\left( \lambda_{j}\right) $ only depends on the bulk part $ \sum_{j=M+1}^{p}f\left( \lambda_{j}\right) $ limiting result, which means it does not depend on the random part of the bulk part, thus we can get the indenpendence of the spiked part and bulk part. Without less of generality, we consider $ f\left(x \right)=x $ first.  \\
When $ f\left(x \right)=x $, from proof of Theorem 3.2 in \cite{10.3150/20-BEJ1237}, they get an equation $$ 0=\left| \left[\bgO_{M}\left( \phi_{k}\right) \right] _{kk}+\mathrm{ lim}\gamma_{kj}\left\lbrace \phi_{k}^{2}\underline m_{2}(\phi_{k})\right\rbrace \bbI_{m_{k}}\right|$$ where $\left[\bgO_{M}\left( \phi_{k}\right) \right] =\dfrac{\phi_{k}}{\sqrt{n}}\left[ \mathrm{tr}\left\lbrace \left(  \phi_{k}\bbI-\frac{1}{n}\bbX^{*}\bGma \bbX\right) ^{-1}\right\rbrace \bbI-\bbU_{1}^{*}\bbX\left(\phi_{k}\bbI-\frac{1}{n}\bbX^{*}\bGma \bbX\right) ^{-1}\bbX^{*}\bbU_{1}\right] $,  $\left[\cdot  \right]_{kk} $ is the $ k $ th diagonal element of a matrix. $ \bGma=\bbU_{2}\bbD_{2}\bbU_{2}^{*}$. $ \gamma_{kj}=\sqrt{n}\left( \dfrac{\lambda_{j}}{\phi_{n}\left(\al_{k} \right) }-1 \right)$. $\underline m_{2}(\phi_{k}) $ is the limit of $ \mathrm{tr}\left\lbrace \left(  \phi_{k}\bbI-\frac{1}{n}\bbX^{*}\bGma \bbX\right) ^{-2}\right\rbrace $.  From the equation, we know that $ \gamma_{kj} $ has the same asymptotic limiting distribution with
$ -\dfrac{\left[\bgO_{M}\left( \phi_{k}\right) \right] _{kk}}{\phi_{k}^{2}\underline m_{2}(\phi_{k})} $, given $\mathrm{tr}\left\lbrace \left(  \phi_{k}\bbI-\frac{1}{n}\bbX^{*}\bGma \bbX\right) ^{-1}\right\rbrace $, the limiting distribution of $ -\dfrac{\left[\bgO_{M}\left( \phi_{k}\right) \right] _{kk}}{\phi_{k}^{2}\underline m_{2}(\phi_{k})} $ only depends on the bulk part limiting result, has nothing to do with the random part, so the indenpendence between the spiked part and bulk part is obtained when $ f\left(x \right)= x $. \\
When $ f\left(x \right)\neq x $, by Newton-Leibniz formula, we have %要加一个assumption 保证f不能是指数函数
\begin{align*}
	&\sum_{j=1}^{M}f\left(\lambda_{j}\right)-\sum_{k=1}^{K}\sum_{j\in J_{k}}f\left(\phi_{n}\left(\bbalp_{j} \right) \right)\\
	&=\sum_{k=1}^{K}\sum_{j\in J_{k}}\int_{0}^{\dfrac{\phi_{n}\left(\bbalp_{j} \right)}{\sqrt{n}}\gamma_{kj}}f'\left(t+\phi_{n}\left(\bbalp_{j} \right) \right)dt\\
	&=\sum_{k=1}^{K}\sum_{j\in J_{k}}\int_{0}^{1}\dfrac{\phi_{n}\left(\bbalp_{j} \right)}{\sqrt{n}}\gamma_{kj}f'\left(\phi_{n}\left(\bbalp_{j} \right)\left(1+\frac{\gamma_{kj}}{\sqrt{n}}s\right)  \right)ds\\
	&=\sum_{k=1}^{K}\sum_{j\in J_{k}}\int_{0}^{1}\dfrac{\phi_{n}\left(\bbalp_{j} \right)}{\sqrt{n}}\gamma_{kj}\dfrac{f'\left(\phi_{n}\left(\bbalp_{j} \right)\left(1+\frac{\gamma_{kj}}{\sqrt{n}}s\right)  \right)}{f'\left(\phi_{n}\left(\bbalp_{j} \right)\right) }f'\left( \phi_{n}\left(\bbalp_{j} \right)\right)ds\\
	&\stackrel{*}\rightarrow\sum_{k=1}^{K}\sum_{j\in J_{k}}\int_{0}^{1}\dfrac{\phi_{n}\left(\bbalp_{j} \right)}{\sqrt{n}}\gamma_{kj}f'\left( \phi_{n}\left(\bbalp_{j} \right)\right)ds
	=\sum_{k=1}^{K}\sum_{j\in J_{k}}\dfrac{\phi_{n}\left(\bbalp_{j} \right)}{\sqrt{n}}\gamma_{kj}f'\left( \phi_{n}\left(\bbalp_{j} \right)\right)    
\end{align*}
where $ (\ast) $ is true due to Assumption 6. Thus we turn it into a function of $ \gamma_{kj} $, we have prove above that given non-spiked eigenvalues, the limiting distribution of $ \gamma_{kj} $ is only concerned with bulk part's limiting result, so does $ \sum_{k=1}^{K}\sum_{j\in J_{k}}\dfrac{\phi_{n}\left(\bbalp_{j} \right)}{\sqrt{n}}\gamma_{kj}f'\left( \phi_{n}\left(\bbalp_{j} \right)\right) $, accordingly we could get the conclusion that  $ \sum_{j=1}^{M}f\left( \lambda_{j}\right) $  and $ \sum_{j=M+1}^{p}f\left( \lambda_{j}\right) $ are mutually asymptotic independent.\\
From Lemma \ref{lemma2} we know the difference between $\sum_{j=M+1}^{p}f\left( \lambda_{j}\right) $ and $ \sum_{j=1}^{p-M}f\left( \widetilde{\bgl_{j}}\right)$ tends to a nonrandom part $ -\frac{M}{2\pi i}\oint_{C}f\left(z \right)\dfrac{\underline{m}^{'}(z)}{\underline{m}(z)}dz $. So $ \sum_{j=1}^{M}f\left( \lambda_{j}\right) $ is also asymptotic independent of $ \sum_{j=1}^{p-M}f\left( \widetilde{\bgl_{j}}\right)$, the proof is accomplished.
\end{proof}

\subsection{Proof of Theorem \ref{thm1}}

The proof of Theorem \ref{thm1} builds on the decomposition analysis of linear spectral statistics, dividing it into the spiked part $ \sum_{j=1}^{M}f\left( \bgl_{j}\right) $ and the bulk part $ \sum_{j=M+1}^{p}f\left( \bgl_{j}\right) $. We focus on each dimension of the random vector at first,
so we have
\begin{align*}
	&\sum_{j=1}^{p}f\left( \bgl_{j}\right)-p\int f(x)dF^{c_{n},H_{n}} \\
	&=\sum_{j=1}^{M}f\left( \bgl_{j}\right)+\sum_{j=M+1}^{p}f\left( \bgl_{j}\right)-p\int f(x)dF^{c_{n},H_{n}}\\
	&=\sum_{j=1}^{M}f\left( \bgl_{j}\right)+\sum_{j=1}^{p-M}f\left( \widetilde{\bgl_{j}}\right)-(p-M)\int f(x)dF^{c_{nM},H_{2n}}+\sum_{j=M+1}^{p}f\left( \bgl_{j}\right)-\sum_{j=1}^{p-M}f\left( \widetilde{\bgl_{j}}\right)\\
	&+(p-M)\int f(x)dF^{c_{nM},H_{2n}}-p\int f(x)dF^{c_{n},H_{n}}
\end{align*}
Lemma \ref{lemma1} has shown the difference between $ (p-M)\int f(x)dF^{c_{nM},H_{2n}}-p\int f(x)dF^{c_{n},H_{n}}$ is 0. 
Moreover, in Lemma \ref{lemma2} we have proved $$ \sum_{j=M+1}^{p}f\left( \bgl_{j}\right)-\sum_{j=1}^{p-M}f\left( \widetilde{\bgl_{j}}\right)=\frac{M}{2\pi i}\oint_{C}f\left(z \right)\dfrac{\underline{m}^{'}(z)}{\underline{m}(z)}dz. $$
Thus 
\begin{align*}
	&\sum_{j=1}^{p}f\left( \bgl_{j}\right)-p\int f(x)dF^{c_{n},H_{n}}\\
	&=\sum_{j=1}^{M}f\left( \bgl_{j}\right)+\sum_{j=1}^{p-M}f\left( \widetilde{\bgl_{j}}\right)-(p-M)\int f(x)dF^{c_{nM},H_{2n}}+\frac{M}{2\pi i}\oint_{C}f\left(z \right)\dfrac{\underline{m}^{'}(z)}{\underline{m}(z)}dz,
\end{align*}
which then yields
\begin{align}
	&\sum_{j=1}^{p}f\left( \bgl_{j}\right)-p\int f(x)dF^{c_{n},H_{n}}-\sum_{k=1}^{K}\sum_{j\in J_{k}}f\left( \phi_{n}\left(\al_{j} \right) \right)-\frac{M}{2\pi i}\oint_{C}f\left(z \right)\dfrac{\underline{m}^{'}(z)}{\underline{m}(z)}dz\label{101}\\
	&=\sum_{j=1}^{M}f\left( \bgl_{j}\right)-\sum_{k=1}^{K}\sum_{j\in J_{k}}f\left( \phi_{n}\left(\al_{j} \right) \right)+\sum_{j=1}^{p-M}f\left( \widetilde{\bgl_{j}}\right)-(p-M)\int f(x)dF^{c_{nM},H_{2n}}. \label{100}
\end{align}
The analysis below is carried out by dividing the (\ref{100}) into two parts: the spiked part $ \sum_{j=1}^{M}f\left( \bgl_{j}\right)-\sum_{k=1}^{K}\sum_{j\in J_{k}}f\left( \phi_{n}\left(\al_{j} \right) \right) $ and the bulk part $ \sum_{j=1}^{p-M}f\left( \widetilde{\bgl_{j}}\right)-(p-M)\int f(x)dF^{c_{nM},H_{2n}} $.

We consider the asymptotic distribution of the bulk part first. It is a direct result of \cite{10.1214/aop/1078415845} or \cite{10.1214/14-AOS1292}. Therefore from Lemma \ref{lemma3} the limiting distribution of  $$\left(  \sum_{j=1}^{p-M}f_{1}\left( \widetilde{\bgl_{j}}\right)-(p-M)\int f_{1}(x)dF^{c_{nM},H_{2n}},\cdots,\sum_{j=1}^{p-M}f_{h}\left( \widetilde{\bgl_{j}}\right)-(p-M)\int f_{h}(x)dF^{c_{nM},H_{2n}} \right)  $$ is Gaussian distribution, with mean function
\begin{align*}
	\mu\triangleq E\boldsymbol Q_{f}&=
	-\frac{\alpha_{x}}{2 \pi i}\cdot\oint_{\mathcal{C}}f(z)\dfrac{  c \int \underline{m}^{3}(z)t^{2}\left(1+t \underline{m}(z)\right)^{-3} d H(t)}{\left(1-c \int \frac{\underline{m}^{2}(z) t^{2}}{\left(1+t \underline{m}(z)\right)^{2}} d H(t)\right)\left(1-\alpha_{x} c \int \frac{\underline{m}^{2}(z) t^{2}}{\left(1+t \underline{m}(z)\right)^{2}} d H(t)\right) }dz \\
	&-\frac{\beta_{x}}{2 \pi i} \cdot \oint_{\mathcal{C}} f(z) \frac{c \int \underline{m}^{3}(z) t^{2}\left(1+t \underline{m}(z)\right)^{-3} d H(t)}{1-c \int \underline{m}^{2}(z) t^{2}\left(1+t \underline{m}(z)\right)^{-2} d H(t)} dz
\end{align*}
and covariance function
\begin{align*}
	\sigma_{s,t}^{2}\triangleq Cov\left(\boldsymbol Q_{f_{s}}, \boldsymbol Q_{f_{t}}\right)
	&=-\frac{1}{4 \pi^{2}} \oint_{\mathcal{C}_{1}} \oint_{\mathcal{C}_{2}} \frac{f_{s}\left(z_{1}\right) f_{t}\left(z_{2}\right)}{\left(\underline{m} {\left.\left(z_{1}\right)-\underline{m}\left(z_{2}\right)\right)^{2}}\right.} d \underline{m}\left(z_{1}\right) d \underline{m}\left(z_{2}\right) 
	-\frac{c \beta_{x}}{4 \pi^{2}} \oint_{\mathcal{C}_{1}} \oint_{\mathcal{C}_{2}} f_{s}\left(z_{1}\right) f_{t}\left(z_{2}\right)\\
	&\left[\int \frac{t}{\left(\underline{m}\left(z_{1}\right) t+1\right)^{2}}\right.
	\left.\times \frac{t}{\left(\underline{m}\left(z_{2}\right) t+1\right)^{2}} d H(t)\right] d \underline{m}\left(z_{1}\right) d \underline{m}\left(z_{2}\right)\\
	&\quad-\frac{1}{4 \pi^{2}} \oint_{\mathcal{C}_{1}} \oint_{\mathcal{C}_{2}} f_{s}\left(z_{1}\right) f_{t}\left(z_{2}\right)\left[\frac{\partial^{2}}{\partial z_{1} \partial z_{2}} \log \left(1-a\left(z_{1}, z_{2}\right)\right)\right] d z_{1} d z_{2}
\end{align*} 
where $ \underline{m}(z) $ is the stieltjes transform of $ \underline{F}^{c,H} $, $\mathcal{C}, \mathcal{C}_{1}$ and $\mathcal{C}_{2}$ are closed contours in the complex plan enclosing the support of the LSD $ F^{y, H}$, and $\mathcal{C}_{1}$ and $\mathcal{C}_{2}$ being nonoverlapping, the function $a\left(z_{1}, z_{2}\right)$ is
$$
a\left(z_{1}, z_{2}\right)=\alpha_{x}\left(1+\frac{\underline{m}\left(z_{1}\right) \underline{m}\left(z_{2}\right)\left(z_{1}-z_{2}\right)}{\underline{m}\left(z_{2}\right)-\underline{m}\left(z_{1}\right)}\right).
$$
Here $ \beta_{x}= E\left|x_{ij}\right| ^{4}- \left|Ex_{ij}^{2}\right|^{2}-2, \alpha_{x}=\left|Ex_{ij}^{2}\right|^{2} $.

Then we consider the asymptotic distribution of the spiked part $ \sum_{j=1}^{M}f\left(\lambda_{j}\right)-\sum_{k=1}^{K}\sum_{j\in J_{k}}f\left(\phi_{n}\left(\bbalp_{j} \right) \right) $, from the proof of Lemma \ref{lemma5}, $ \sum_{j=1}^{M}f\left( \bgl_{j}\right)-\sum_{k=1}^{K}\sum_{j\in J_{K}}f\left(\phi_{n}\left(\bbalp_{j} \right)\right)  $ has the same limiting distribution as $ \sum_{k=1}^{K}\dfrac{\phi_{n}\left(\bbalp_{k} \right)}{\sqrt{n}}f'\left( \phi_{n}\left(\bbalp_{k} \right)\right)\sum_{j\in J_{k}}\gamma_{kj}  $.  

From Lemma \ref{lemma4} we have $ \left(\gamma_{kj}, j\in J_{k} \right)'\stackrel{d}\rightarrow-\dfrac{1}{\theta_{k}}\left[\bgO_{\phi_{k}} \right]_{kk},  $  so $ \sum_{j\in J_{k}}\gamma_{kj}\stackrel{d}\rightarrow -\dfrac{1}{\theta_{k}}\mathrm {tr}\left[\bgO_{\phi_{k}} \right]_{kk}    $. Recall that $ \omega_{ij} $ is the elements of $ \bgO_{\phi_{k}} $, $ \mathrm {tr}\left[\bgO_{\phi_{k}} \right]_{kk} $ is the summation of the diagonal element, that is $ \sum_{j\in J_{k}} \omega_{jj} $. Because the diagonal elements are i.i.d., thus $ E\left(\sum_{j\in J_{k}} \omega_{jj}\right) =0  $, $$ Var\left(\sum_{j\in J_{k}} \omega_{jj}\right)=\sum_{j\in J_{k}}Var\left(\omega_{jj} \right)+ \sum_{j_{1}\neq j_{2}}cov\left(\omega_{j_{1}j_{1}}, \omega_{j_{2}j_{2}} \right)= \sum_{j\in J_{k}}\left( \left(q+1 \right)\theta_{k}+\pi_{x,jjjj}\nu_{k}\right) +\sum_{j_{1}\neq j_{2}}\pi_{x,j_{1}j_{1}j_{2}j_{2}}\nu_{k}  $$ Therefore
from Lemma \ref{lemma4} we have, the asymptotic distribution of $ \sum_{k=1}^{K}\sum_{j\in J_{k}}\gamma_{kj} $ is Gaussian distribution, with $$
E\left(\sum_{k=1}^{K}\sum_{j\in J_{k}}\gamma_{kj}\right)=0,\\
\sigma_{k}^{2}\triangleq Var\left(\sum_{k=1}^{K}\sum_{j\in J_{k}}\gamma_{kj}\right)=\sum_{k=1}^{K}\dfrac{ \sum_{j\in J_{k}}\left( \left(q+1 \right)\theta_{k}+\pi_{x,jjjj}\nu_{k}\right) +\sum_{j_{1}\neq j_{2}}\pi_{x,j_{1}j_{1}, j_{2}j_{2}}\nu_{k}}{\theta_{k}^{2}} $$ 

Finally we focus on the asymptotic distribution of the equation \ref{100}, for simplicity we define $$\rho_{l}=\dfrac{1}{\sqrt{\sum_{k=1}^{K}\left(\dfrac{\phi_{n}\left(\al_{k} \right) }{\sqrt{n}}f'_{l}\left(\phi_{n}\left(\al_{k} \right) \right)  \right)^{2}+1 }},$$
$$ G_{n}\left( x\right)=p\left[F^{\bbB}\left(x \right)-F^{c_{n},H_{n}}\left(x \right)   \right],  $$ 
$$ \boldsymbol Y_{n}\left(f_{l} \right)= \rho_{l}\left( \int f_{l}\left(x \right)dG_{n}\left( x\right)-\sum_{k=1}^{K}\sum_{j\in J_{k}}\left(\phi_{n}\left(\al_{j} \right)  \right)-\dfrac{M}{2\pi i}\oint_{\mathcal C}f_{l}\left(z \right)\dfrac{\underline{m}'(z)}{\underline{m}(z)}dz\right). $$ %$$\boldsymbol Y_{n}\left(f_{l} \right)=\rho_{l}\boldsymbol X_{n}\left(f_{l} \right)$$
the random vector $\left( \boldsymbol Y_{n}\left(f_{1} \right) ,\cdots,\boldsymbol Y_{n}\left(f_{h} \right) \right)  $ could be written into the summation of two random vectors: the spiked part $ \left(\rho_{1} \sum_{k=1}^{K}\dfrac{\phi_{n}\left(\bbalp_{k} \right)}{\sqrt{n}}f_{1}'\left( \phi_{n}\left(\bbalp_{k} \right)\right)\sum_{j\in J_{k}}\gamma_{kj} ,\cdots,\rho_{h}\sum_{k=1}^{K}\dfrac{\phi_{n}\left(\bbalp_{k} \right)}{\sqrt{n}}f_{h}'\left( \phi_{n}\left(\bbalp_{k} \right)\right)\sum_{j\in J_{k}}\gamma_{kj}  \right)$ and the bulk part $\left(  \rho_{1}\left[ \sum_{j=1}^{p-M}f_{1}\left( \widetilde{\bgl_{j}}\right)-(p-M)\int f_{1}(x)dF^{c_{nM},H_{2n}}\right] ,\cdots,\rho_{h}\left[ \sum_{j=1}^{p-M}f_{h}\left( \widetilde{\bgl_{j}}\right)-(p-M)\int f_{h}(x)dF^{c_{nM},H_{2n}}\right] \right)    $
From the result above, the mean function of the spiked part is 0 and the covariance function is 
$$
Cov\left(\boldsymbol Y_{f_{s}}, \boldsymbol Y_{f_{t}} \right)=\rho_{s}\rho_{t}\sum_{k=1}^{K}\dfrac{\phi_{n}^{2}\left(\bbalp_{k} \right)}{n}f_{s}'\left( \phi_{n}\left(\bbalp_{k} \right)\right)f_{t}'\left( \phi_{n}\left(\bbalp_{k} \right)\right)\sigma_{k}^{2},   
$$
and the mean function of the bulk part is $ \rho_{l}\mu $ and the covariance function is 
$
 \rho_{s} \rho_{t}\sigma_{s,t}^{2}.
$ Because of Lemma \ref{lemma5}, the spiked part and the bulk part are asymptotically independent, thus $$E\left( \boldsymbol Y_{f_{l}}\right)= \rho_{l}\mu, Cov\left(\boldsymbol Y_{f_{s}}, \boldsymbol Y_{f_{t}} \right)=\rho_{s}\rho_{t}\sum_{k=1}^{K}\dfrac{\phi_{n}^{2}\left(\bbalp_{k} \right)}{n}f_{s}'\left( \phi_{n}\left(\bbalp_{k} \right)\right)f_{t}'\left( \phi_{n}\left(\bbalp_{k} \right)\right)\sigma_{k}^{2}+\rho_{s} \rho_{t}\sigma_{s,t}^{2}.  $$ Therefore the proof is finished.

\bibliography{Unbounded_norm}

\begin{thebibliography}{10}

\bibitem{10.1214/09-AOS694}
Zhidong Bai, Dandan Jiang, Jianfeng Yao, and Shurong Zheng.
\newblock {Corrections to LRT on large-dimensional covariance matrix by RMT}.
\newblock {\em The Annals of Statistics}, 37(6B):3822 -- 3840, 2009.

\bibitem{10.1214/aop/1078415845}
Zhidong Bai and Jack~W. Silverstein.
\newblock {CLT for linear spectral statistics of large-dimensional sample
  covariance matrices}.
\newblock {\em The Annals of Probability}, 32(1A):553 -- 605, 2004.

\bibitem{bai2010spectral}
Zhidong Bai and Jack~W. Silverstein.
\newblock {\em Spectral analysis of large dimensional random matrices},
  volume~20.
\newblock Springer, 2010.

\bibitem{bai2010functional}
Zhidong Bai, Xiaoying Wang, and Wang Zhou.
\newblock Functional clt for sample covariance matrices.
\newblock {\em Bernoulli}, 16(4):1086--1113, 2010.

\bibitem{10.1214/07-AIHP118}
Zhidong Bai and Jianfeng Yao.
\newblock {Central limit theorems for eigenvalues in a spiked population
  model}.
\newblock {\em Annales de l'Institut Henri Poincaré, Probabilités et
  Statistiques}, 44(3):447 -- 474, 2008.

\bibitem{BAI2012167}
Zhidong Bai and Jianfeng Yao.
\newblock On sample eigenvalues in a generalized spiked population model.
\newblock {\em Journal of Multivariate Analysis}, 106:167--177, 2012.

\bibitem{baik2018ferromagnetic}
Jinho Baik, Ji~Oon Lee, and Hao Wu.
\newblock Ferromagnetic to paramagnetic transition in spherical spin glass.
\newblock {\em Journal of Statistical Physics}, 173(5):1484--1522, 2018.

\bibitem{BAIK20061382}
Jinho Baik and Jack~W. Silverstein.
\newblock Eigenvalues of large sample covariance matrices of spiked population
  models.
\newblock {\em Journal of Multivariate Analysis}, 97(6):1382--1408, 2006.

\bibitem{doi:10.1080/07474938.2017.1307580}
Badi~H. Baltagi, Chihwa Kao, and Fa~Wang.
\newblock Asymptotic power of the sphericity test under weak and strong factors
  in a fixed effects panel data model.
\newblock {\em Econometric Reviews}, 36(6-9):853--882, 2017.

\bibitem{cai2020limiting}
T.~Tony Cai, Xiao Han, and Guangming Pan.
\newblock Limiting laws for divergent spiked eigenvalues and largest nonspiked
  eigenvalue of sample covariance matrices.
\newblock {\em Annals of Statistics}, 48(3):1255--1280, 2020.

\bibitem{chen2015clt}
Binbin Chen and Guangming Pan.
\newblock Clt for linear spectral statistics of normalized sample covariance
  matrices with the dimension much larger than the sample size.
\newblock {\em Bernoulli}, 21(2):1089--1133, 2015.

\bibitem{hu2019high}
Jiang Hu, Weiming Li, Zhi Liu, and Wang Zhou.
\newblock High-dimensional covariance matrices in elliptical distributions with
  application to spherical test.
\newblock {\em Annals of Statistics}, 47(1):527--555, 2019.

\bibitem{10.3150/20-BEJ1237}
Dandan Jiang and Zhidong Bai.
\newblock {Generalized four moment theorem and an application to CLT for spiked
  eigenvalues of high-dimensional covariance matrices}.
\newblock {\em Bernoulli}, 27(1):274 -- 294, 2021.

\bibitem{pg4mt}
Dandan Jiang and Zhidong Bai.
\newblock {Partial generalized four moment theorem revisited A note on
  generalized four moment theorem and its application to CLT for spiked
  eigenvalues of high-dimensional covariance matrices }.
\newblock {\em accepted by Bernoulli}, 2021.

\bibitem{10.1214/aos/1009210544}
Iain~M. Johnstone.
\newblock {On the distribution of the largest eigenvalue in principalcomponents
  analysis}.
\newblock {\em The Annals of Statistics}, 29(2):295 -- 327, 2001.

\bibitem{10.1093/biomet/asw060}
Iain~M. Johnstone and Boaz Nadler.
\newblock {Roy’s largest root test under rank-one alternatives}.
\newblock {\em Biometrika}, 104(1):181--193, 01 2017.

\bibitem{jonsson1982some}
Dag Jonsson.
\newblock Some limit theorems for the eigenvalues of a sample covariance
  matrix.
\newblock {\em Journal of Multivariate Analysis}, 12(1):1--38, 1982.

\bibitem{li2020asymptotic}
Zeng Li, Fang Han, and Jianfeng Yao.
\newblock Asymptotic joint distribution of extreme eigenvalues and trace of
  large sample covariance matrix in a generalized spiked population model.
\newblock {\em Annals of Statistics}, 48(6):3138--3160, 2020.

\bibitem{najim2016gaussian}
Jamal Najim and Jianfeng Yao.
\newblock Gaussian fluctuations for linear spectral statistics of large random
  covariance matrices.
\newblock {\em Annals of Applied Probability}, 26(3):1837--1887, 2016.

\bibitem{pan2014comparison}
Guangming Pan.
\newblock Comparison between two types of large sample covariance matrices.
\newblock {\em Annales de l'IHP Probabilit{\'e}s et statistiques},
  50(2):655--677, 2014.

\bibitem{pan2008central}
Guangming Pan and Wang Zhou.
\newblock Central limit theorem for signal-to-interference ratio of reduced
  rank linear receiver.
\newblock {\em The Annals of Applied Probability}, 18(3):1232--1270, 2008.

\bibitem{10.2307/24307692}
Debashis Paul.
\newblock Asymptotics of sample eigenstructure for a large dimensional spiked
  covariance model.
\newblock {\em Statistica Sinica}, 17(4):1617--1642, 2007.

\bibitem{yin2021spectral}
Yanqing Yin.
\newblock Spectral statistics of high dimensional sample covariance matrix with
  unbounded population spectral norm.
\newblock {\em arXiv preprint arXiv:2104.03417}, 2021.

\bibitem{zhang2020asymptotic}
Zhixiang Zhang, Shurong Zheng, Guangming Pan, and Pingshou Zhong.
\newblock Asymptotic independence of spiked eigenvalues and linear spectral
  statistics for large sample covariance matrices.
\newblock {\em arXiv preprint arXiv:2009.11010}, 2020.

\bibitem{10.1214/14-AOS1292}
Shurong Zheng, Zhidong Bai, and Jianfeng Yao.
\newblock {Substitution principle for CLT of linear spectral statistics of
  high-dimensional sample covariance matrices with applications to hypothesis
  testing}.
\newblock {\em The Annals of Statistics}, 43(2):546 -- 591, 2015.

\bibitem{zheng2017clt}
Shurong Zheng, Zhidong Bai, and Jianfeng Yao.
\newblock Clt for eigenvalue statistics of large-dimensional general fisher
  matrices with applications.
\newblock {\em Bernoulli}, 23(2):1130--1178, 2017.

\end{thebibliography}

\end{document}